\theoremstyle{plain}
\newtheorem{theorem}{Theorem}[section]
\theoremstyle{remark}
\newtheorem{remark}[theorem]{Remark}
\newtheorem{corollary}[theorem]{Corollary}
\newtheorem{lemma}[theorem]{Lemma}
\newtheorem{proposition}[theorem]{Proposition}
\newtheorem{definition}[theorem]{Definition}
\numberwithin{equation}{section}
\def \d {\mathrm{d}}
\title[Identification of source terms in heat equation] %Use the shortened version of the full title
      {Identification of source terms in heat equation with dynamic boundary conditions}
\author{E. M. Ait Ben Hassi}
\address{E. M. Ait Ben hassi, Cadi Ayyad University, Faculty of Sciences Semlalia, LMDP, UMMISCO (IRD-UPMC), B.P. 2390, Marrakesh, Morocco}
\email{m.benhassi@gmail.com}	
\author{S. E. Chorfi}
\address{S. E. Chorfi, Cadi Ayyad University, Faculty of Sciences Semlalia, LMDP, UMMISCO (IRD-UPMC), B.P. 2390, Marrakesh, Morocco}
\email{chorphi@gmail.com}	
\author{L. Maniar}
\address{L. Maniar, Cadi Ayyad University, Faculty of Sciences Semlalia, LMDP, UMMISCO (IRD-UPMC), B.P. 2390, Marrakesh, Morocco}
\email{maniar@uca.ma}
\subjclass[2020]{Primary: 35R30; Secondary: 35K05, 49N45, 47A05.}
 \keywords{inverse source problem, parabolic problem, quasi-solution, adjoint problem, Fréchet gradient, Lipschitz continuity.}
 \keywords{inverse source problem, parabolic problem, quasi-solution, adjoint problem, Fréchet gradient, Lipschitz continuity.}
\begin{document}
\begin{abstract}
We study an inverse parabolic problem of identifying two source terms in heat equation with dynamic boundary conditions from a final time overdetermination data. Using a weak solution approach by Hasanov, the associated cost functional is analyzed, especially a gradient formula of the functional is proved and given in terms of the solution of an adjoint problem. Next, the existence and uniqueness of a quasi-solution are also investigated. Finally, the numerical reconstruction of some heat sources in a 1-D equation is presented to show the efficiency of the proposed algorithm.
\end{abstract}

\maketitle

\section{Introduction}
In this paper, we are interested in an inverse parabolic problem. It consists of identifying two source terms in a heat equation with dynamic boundary conditions, from a noisy measurement of the temperature at final time. 

Let $T>0$ be a fixed final time and let $\Omega \subset \mathbb{R}^N$ be a bounded domain ($N\geq 2$ is an integer) with boundary $\Gamma=\partial\Omega$ of class $C^2$. We denote $\Omega_T =(0,T)\times \Omega$ and $\Gamma_T =(0,T)\times \Gamma$. We consider the following heat equation
\begin{empheq}[left = \empheqlbrace]{alignat=2}
\begin{aligned}
&\partial_t y -d \Delta y+a(x)y = F(t,x), &&\qquad \text{in } \Omega_T , \\
&\partial_t y_{\Gamma} -\gamma \Delta_{\Gamma} y_{\Gamma}+d\partial_{\nu} y + b(x)y_{\Gamma} = G(t,x), &&\qquad \text{on } \Gamma_T, \\
&y_{\Gamma}(t,x) = y_{|\Gamma}(t,x), &&\qquad \text{on } \Gamma_T, \\
&(y,y_{\Gamma})\rvert_{t=0}=(y_0, y_{0,\Gamma}),   &&\qquad \Omega\times\Gamma \label{eq1to4}
\end{aligned}
\end{empheq}
for initial data $Y_0:=(y_0, y_{0,\Gamma})\in L^2(\Omega)\times L^2(\Gamma)$ and heat sources $F\in L^2(\Omega_T)$ and $G\in L^2(\Gamma_T)$. The diffusion coefficients are positive constants $d, \gamma > 0$, and the spatial potentials are such that $a\in L^\infty(\Omega)$ and $b\in L^\infty(\Gamma)$. $\Delta=\Delta_x$ denotes the standard Laplace operator with respect to the space variable. By $y_{\mid \Gamma},$ one denotes the trace of $y$, while the normal derivative is denoted by $\partial_{\nu} y:=(\nabla y \cdot \nu)_{|\Gamma}$, where $\nu(x)$ stands for the unit outward normal vector to $\Gamma$ at $x$. The tangential gradient $\nabla_\Gamma y$ is given by $\nabla_{\Gamma} y=\nabla y-\left(\partial_{\nu} y\right) \nu$. Let $\mathrm{g}$ be the natural Riemannian metric on $\Gamma$ inherited from $\mathbb{R}^N$. The Laplace-Beltrami operator $\Delta_\Gamma$ (with respect to $\mathrm{g}$) is given locally by
\begin{equation*}
\Delta_\Gamma=\frac{1}{\sqrt{|\mathrm{g}|}} \sum_{i,j=1}^{N-1} \frac{\partial}{\partial x^i} \left(\sqrt{|\mathrm{g}|}\, \mathrm{g}^{ij} \frac{\partial}{\partial x^j}\right),
\end{equation*}
where $\mathrm{g}=\left(\mathrm{g}_{ij}\right)$ is the metric tensor corresponding to $\mathrm{g}$, $\mathrm{g}^{-1}=\left(\mathrm{g}^{ij}\right)$ its inverse and $|\mathrm{g}|=\det\left(\mathrm{g}_{ij}\right)$. In the sequel, we mainly use the following surface divergence formula
\begin{equation}\label{sdt}
\int_\Gamma \Delta_\Gamma y\, z \,\d S =- \int_\Gamma \langle \nabla_\Gamma y, \nabla_\Gamma z\rangle_\Gamma \,\d S, \quad y\in H^2(\Gamma), \, z\in H^1(\Gamma),
\end{equation}
where $\langle \cdot, \cdot \rangle_\Gamma$ is the Riemannian inner product of tangential vectors on $\Gamma$.

Parabolic equations with dynamic boundary conditions have received a lot of attention of many researchers in the last years \cite{BCMO'20, FGGR'02, KM'19, KMMR'19, MMS'17, MZ'05}, since they appear in several fields of applications including chemical engineering such as chemical reactors, the chemistry of colloids and special flows in hydrodynamics \cite{VV'83}. The scope of applications also includes heat transfer problems where the diffusion takes place between a solid and a fluid in motion \cite{La'32}. We refer to the seminal paper \cite{Go'06}, where the physical derivation and interpretation of dynamic boundary conditions are discussed.

A variety of analytical and numerical techniques have been proposed for solving inverse problems in partial differential equations (PDEs) with static boundary conditions (Dirichlet, Neumann or Robin conditions); see, for instance, \cite{BK'81, Er'15, Ha'07, IS'90}. A well-known theoretical technique is the Carleman estimate, which is a priori weighted inequality estimating the solution of a PDE and its derivatives via the associated differential operator. Such an estimate was introduced, for the first time, in the study of multidimensional inverse problems by Bukhgeim and Klibanov \cite{BK'81}, and became a powerful tool to establish uniqueness and stability results. Recently in \cite{ACMO'20}, the authors have applied this method in the study of an inverse source problem for general parabolic equations with dynamic boundary conditions from interior measurements. They established a Lipschitz stability estimate for the source terms. In \cite{ACM'21}, we have considered an inverse problem of radiative potentials and initial temperatures for the same equation. We have proven a Lipschitz stability estimate for the potentials and then obtained a logarithmic stability result for the initial conditions by a logarithmic convexity method. It is worthwhile to mention that Carleman estimates are also important in the numerical study of inverse problems \cite{KL'13}.

In the context of numerical studies, Hasanov \cite{Ha'07} developed a weak solution approach to study the simultaneous determination of source terms in heat equation with static boundary conditions from the final overdetermination, Dirichlet or Neumann types output measured data \cite{Ha'11, HOA'11, KE'15}. The underlying method relies on reformulating a quasi-solution problem as a minimization problem of Tikhonov functional, combined with an adjoint problem approach introduced by DuChateau \cite{Du'96, DTB'04}. This approach provides a monotone iteration scheme to reconstruct unknown parameters in ill-posed problems, which implies fast numerical results. It has also been successfully applied to source terms in the cantilevered beam equation \cite{Ha'09}, Lotka Volterra system \cite{GBB'12}, and recently for thermal conductivity and radiative potential in heat equation from Dirichlet and Neumann boundary measured outputs \cite{Ha'20}. We refer the interested readers to the recent book by Hasanov and Romanov \cite{HR'17}, which presents a systematic study of mathematical and numerical methods used in inverse problems within this framework.

In the present work, we extend the previous technique for the determination of two source terms acting in both, in the domain and on the boundary, in heat equation with dynamic boundary conditions. In this context, we mention \cite{Sl'15} for the identification of a time-dependent source term from the knowledge of a space average using the backward Euler method. Also, in \cite{Ma'18}, the author deals with a time-dependent source from an integral overdetermination condition in a 1-D heat equation by using the generalized Fourier method. These two papers dealt with a basic dynamic boundary condition that only contains time and normal derivatives on the boundary (known as Wentzell/Ventcel boundary condition). To the best of our knowledge, the study of such problem by a weak solution approach has not been addressed in the literature, and only equations with classical boundary conditions (Dirichlet, Neumann and Robin) are considered. Here, we deal with a multidimensional heat system of two equations coupled through the boundary via the normal derivative, and which contains a surface diffusion on the boundary. Thus, the present problem requires a careful analysis and suitable combination between the two equations in order to obtain the desired results. We also study numerically the reconstruction for a heat source in the 1-D equation from a noisy terminal data.

The rest of the paper is organized as follows: in Section \ref{sec2}, we recall the well-posedness and regularity results of the system \eqref{eq1to4}. In Section \ref{sec3}, we study the minimization problem associated to quasi-solutions. We also infer an explicit gradient formula for the Tikhonov functional via the solution of an appropriate adjoint problem. Then, we establish the Lipschitz continuity of the gradient of the cost functional. In Section \ref{sec4}, we discuss the existence and uniqueness of a quasi-solution. Finally, in Section \ref{sec5}, the gradient formula is implemented via Landweber scheme for the numerical reconstruction of an unknown source term in 1-D equation. Section \ref{sec6} is devoted to some conclusions.

\section{Wellposedness and regularity of the solution}\label{sec2}
We recall some results on the wellposedness and regularity of the solution to system \eqref{eq1to4} needed in the sequel. The reader can refer to \cite{MMS'17} for detailed proofs.

We first introduce the following real spaces
$$\mathbb{L}^2:=L^2(\Omega, \d x)\times L^2(\Gamma, \d S)\qquad \text{ and} \qquad \mathbb{L}^2_T:=L^2(\Omega_T)\times L^2(\Gamma_T).$$
$\mathbb{L}^2$ and $\mathbb{L}^2_T$ are Hilbert spaces equipped with the scalar products given by
\begin{align*}
\langle (y,y_\Gamma),(z,z_\Gamma)\rangle_{\mathbb{L}^2} &=\langle y,z\rangle_{L^2(\Omega)} +\langle y_\Gamma,z_\Gamma\rangle_{L^2(\Gamma)},\\
\langle (y,y_\Gamma),(z,z_\Gamma)\rangle_{\mathbb{L}^2_T} &=\langle y,z\rangle_{L^2(\Omega_T)} +\langle y_\Gamma,z_\Gamma\rangle_{L^2(\Gamma_T)},
\end{align*}
respectively, where we denoted the Lebesgue measure on $\Omega$ by $\d x$ and the surface measure on $\Gamma$ by $\d S$. We also define the space $$\mathbb{H}^k:=\left\{(y,y_\Gamma)\in H^k(\Omega)\times H^k(\Gamma)\colon y_{|\Gamma} =y_\Gamma \right\} \text{ for } k=1,2,$$ equipped with the standard product norm.

For the regularity of the solution, we consider the following spaces
$$\mathbb{E}_1(t_0,t_1):=H^1\left(t_0,t_1 ;\mathbb{L}^2\right) \cap L^2\left(t_0,t_1 ;\mathbb{H}^2\right) \text{  for } t_1 >t_0 \text{ in } \mathbb{R},$$
$$\mathbb{E}_2(t_0,t_1):=H^1\left(t_0,t_1;\mathbb{H}^2\right) \cap H^2\left(t_0,t_1;\mathbb{L}^2\right) \text{  for } t_1 >t_0 \text{ in } \mathbb{R}.$$
In particular,
$$\mathbb{E}_1 := \mathbb{E}_1(0,T) \;\text{ and }\; \mathbb{E}_2:= \mathbb{E}_2(0,T).$$
We rewrite \eqref{eq1to4} in the following abstract form
\begin{equation}\label{acp}
\text{(ACP)	} \; \begin{cases}
\hspace{-0.1cm} \partial_t Y=\mathcal{A} Y+ \mathcal{F}, \quad 0<t \le T, \nonumber\\
\hspace{-0.1cm} Y(0)=Y_0:=(y_0, y_{0,\Gamma}), \nonumber
\end{cases}
\end{equation}
where $Y:=(y,y_{\Gamma})$, $\mathcal{F}=(F,G)$ and the linear operator $\mathcal{A} \colon D(\mathcal{A}) \subset \mathbb{L}^2 \longrightarrow \mathbb{L}^2$ is given by
\begin{equation*}
\mathcal{A}=\begin{pmatrix} d\Delta - a & 0\\ -d\partial_\nu & \gamma \Delta_\Gamma - b\end{pmatrix}, \qquad \qquad D(\mathcal{A})=\mathbb{H}^2.
\end{equation*}
The operator $\mathcal{A}$ generates an analytic $C_0$-semigroup $\left(\mathrm{e}^{t\mathcal{A}}\right)_{t\geq 0}$ on $\mathbb{L}^2$, see \cite{MMS'17} for more details.
\bigskip

In the sequel, we will adopt the following notions for the solutions to system \eqref{eq1to4}.
\begin{definition}
Let $Y_0\in \mathbb{L}^2$ and $(F,G) \in \mathbb{L}^2_T$.
\begin{enumerate}[label=(\alph*),leftmargin=*]
\item A strong solution of \eqref{eq1to4} is a function $Y:=(y, y_{\Gamma}) \in \mathbb{E}_1$ fulfilling \eqref{eq1to4} in $L^2(0,T; \mathbb{L}^2)$.
\item A mild solution of \eqref{eq1to4} is a function $Y:=(y, y_{\Gamma}) \in C([0,T]; \mathbb{L}^2)$ satisfying, for $t\in [0,T]$,
$$Y(t,\cdot)=\mathrm{e}^{t\mathcal{A}} Y_0 + \int_0^t \mathrm{e}^{(t-\tau)\mathcal{A}} [F(\tau, \cdot), G(\tau, \cdot)] \,\d\tau.$$
\item A distributional (weak) solution of \eqref{eq1to4} is a function 
$Y:= (y, y_{\Gamma})\in L^2(0,T;\mathbb{L}^2)$ such that for all $(\varphi,\varphi_\Gamma) \in \mathbb{E}_1$ with $\varphi(T,\cdot) =\varphi_\Gamma(T,\cdot)= 0$ we have
\begin{align*}
 & \int_{\Omega_T} y(-\partial_t \varphi -  d\Delta \varphi +a\varphi)\,\d x \,\d t  
  + \int_{\Gamma_T} y_\Gamma(-\partial_t\varphi_\Gamma- \gamma \Delta_\Gamma \varphi_\Gamma + d\partial_\nu \varphi 
        + b\varphi_\Gamma)\,\d S \,\d t \\
 & = \int_{\Omega_T} F\varphi \,\d x \,\d t + \int_{\Gamma_T} G\varphi_\Gamma \,\d S \,\d t 
   + \int_\Omega y_0 \varphi(0,\cdot)\,\d x + \int_\Gamma y_{0,\Gamma} \varphi_\Gamma(0,\cdot) \,\d S.
\end{align*}
\end{enumerate}
\end{definition} 

The following proposition shows the $\mathbb{L}^2$-regularity for the system \eqref{eq1to4} and highlights the connections between different types of solutions. For the proof, see \cite[Propositions 2.4 and 2.5]{MMS'17}.
\begin{proposition}\label{prop2}
Let $F\in L^2(\Omega_T)$ and $G\in L^2(\Gamma_T)$.
\begin{enumerate}[label=(\roman*),leftmargin=*]
\item For all $Y_0:=(y_0,y_{0,\Gamma})\in \mathbb{H}^1$, there exists a unique strong solution of \eqref{eq1to4} such that $Y:=(y,y_{\Gamma})\in \mathbb{E}_1$. Moreover, there exists a positive constant $C=C(\|a\|_\infty, \|b\|_\infty)$ such that
\begin{equation}\label{eqprop2}
\|Y\|_{\mathbb{E}_{1}} \leq C\left(\|F\|_{L^2(\Omega_T)} + \|G\|_{L^2(\Gamma_T)} + \|Y_0\|_{\mathbb{H}^1}\right).
\end{equation}
\item For all $Y_0:=(y_0,y_{0,\Gamma})\in \mathbb{L}^2$, there exists a unique mild solution of \eqref{eq1to4} $Y:=(y,y_{\Gamma})\in C([0,T]; \mathbb{L}^2)$ such that for all $\tau \in (0,T)$, $$Y\in \mathbb{E}_1(\tau, T):=H^1\left(\tau,T;\mathbb{L}^2\right) \cap L^2\left(\tau,T;\mathbb{H}^2\right).$$ 
Moreover, if $\mathcal{F}=(F,G)\in H^1(0,T; \mathbb{L}^2)$, then for all $\tau \in (0,T)$, we have
$$Y\in \mathbb{E}_2(\tau, T):=H^1\left(\tau,T;\mathbb{H}^2\right) \cap H^2\left(\tau,T;\mathbb{L}^2\right),$$
with initial data $Y(\tau)$.
\item A function $Y$ is a distributional solution of \eqref{eq1to4} if and only if it is a mild solution.
\end{enumerate}
\end{proposition}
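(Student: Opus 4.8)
The plan is to treat \eqref{eq1to4} entirely through its abstract reformulation (ACP) and to exploit the fact, recalled above, that $\mathcal{A}$ generates an analytic $C_0$-semigroup on the Hilbert space $\mathbb{L}^2$. The operator $\mathcal{A}$ arises from the symmetric bilinear form
\begin{align*}
\mathfrak{a}\big((y,y_\Gamma),(z,z_\Gamma)\big) &= d\int_\Omega \nabla y\cdot\nabla z\,\d x + \gamma\int_\Gamma \langle\nabla_\Gamma y_\Gamma,\nabla_\Gamma z_\Gamma\rangle_\Gamma\,\d S \\
&\quad + \int_\Omega a\,y z\,\d x + \int_\Gamma b\, y_\Gamma z_\Gamma\,\d S,
\end{align*}
which is $\mathbb{H}^1$-coercive after a shift, so $\mathcal{A}$ is self-adjoint and bounded above and $(\mathrm{e}^{t\mathcal{A}})_{t\ge0}$ is bounded analytic up to a shift. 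I would then invoke the maximal $L^2$-regularity enjoyed by every such generator on a Hilbert space (de Simon's theorem): for $Y_0=0$ the Duhamel term $\int_0^t \mathrm{e}^{(t-\tau)\mathcal{A}}\mathcal{F}(\tau)\,\d\tau$ lies in $H^1(0,T;\mathbb{L}^2)\cap L^2(0,T;\mathbb{H}^2)=\mathbb{E}_1$ with norm controlled by $\|\mathcal{F}\|_{\mathbb{L}^2_T}$. For part (i) it remains to add the free evolution $\mathrm{e}^{\cdot\mathcal{A}}Y_0$: the trace space of $\mathbb{E}_1$ is the real interpolation space $(\mathbb{L}^2,\mathbb{H}^2)_{1/2,2}=\mathbb{H}^1=D((-\mathcal{A})^{1/2})$, so $Y_0\in\mathbb{H}^1$ forces $\mathrm{e}^{\cdot\mathcal{A}}Y_0\in\mathbb{E}_1$, and superposition gives the unique strong solution. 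To make the constant in \eqref{eqprop2} depend only on $\|a\|_\infty$ and $\|b\|_\infty$, I would split $\mathcal{A}=\mathcal{A}_0-\mathcal{B}$, where $\mathcal{A}_0$ corresponds to $a\equiv b\equiv 0$ and $\mathcal{B}(y,y_\Gamma)=(ay,by_\Gamma)$ is a bounded multiplication on $\mathbb{L}^2$ with $\|\mathcal{B}\|\le\max(\|a\|_\infty,\|b\|_\infty)$; since maximal regularity is stable under bounded perturbations, the perturbation constant carries exactly this dependence.

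Part (ii) is then a consequence of analyticity. For $Y_0\in\mathbb{L}^2$ the mild solution is continuous into $\mathbb{L}^2$, and the analytic smoothing $\mathrm{e}^{t\mathcal{A}}Y_0\in D(\mathcal{A})=\mathbb{H}^2\subset\mathbb{H}^1$ for $t>0$ gives $Y(\tau)\in\mathbb{H}^1$ for every $\tau>0$; applying part (i) on $(\tau,T)$ with datum $Y(\tau)$ yields $Y\in\mathbb{E}_1(\tau,T)$. The improvement to $\mathbb{E}_2(\tau,T)$ when $\mathcal{F}\in H^1(0,T;\mathbb{L}^2)$ I would obtain by a bootstrap in time: the function $W:=\partial_t Y$ formally solves the same Cauchy problem with source $\partial_t\mathcal{F}\in L^2$ and datum $W(\tau)=\mathcal{A}Y(\tau)+\mathcal{F}(\tau)\in\mathbb{L}^2$, so by the first half of part (ii) applied to $W$ one gets $W\in\mathbb{E}_1(\tau',T)$ for $\tau'>\tau$, which unwinds to $\partial_t Y\in L^2(\tau',T;\mathbb{H}^2)$ and $\partial_t Y\in H^1(\tau',T;\mathbb{L}^2)$, that is $Y\in\mathbb{E}_2$ after relabeling.

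For part (iii), the implication mild $\Rightarrow$ distributional follows by inserting the Duhamel representation into the weak identity and using Fubini, together with the fact that the operator appearing against the test pair $(\varphi,\varphi_\Gamma)$ is exactly the formal adjoint of $\mathcal{A}$: the surface divergence formula \eqref{sdt} and Green's formula convert $-d\Delta\varphi$, $-\gamma\Delta_\Gamma\varphi_\Gamma$ and the coupling term $d\partial_\nu\varphi$ into $\langle Y,\,-\partial_t\varphi-\mathcal{A}^*\varphi\rangle$. The converse, and hence uniqueness of distributional solutions, I would prove by transposition: taking the difference of two distributional solutions (which satisfies the homogeneous identity) and choosing $(\varphi,\varphi_\Gamma)$ to be the backward solution of the adjoint problem with arbitrary right-hand side in $\mathbb{L}^2_T$ and zero terminal data—such $\varphi\in\mathbb{E}_1$ exists by part (i) applied to the time-reversed adjoint equation—forces the difference to vanish. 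Since the mild solution is distributional and distributional solutions are unique, the two notions coincide.

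The main obstacle, and the feature distinguishing this system from the classical-boundary-condition case, is the coupling of the two equations through the normal derivative $d\partial_\nu y$ and the compatibility condition $y_{|\Gamma}=y_\Gamma$, which prevents estimating the bulk and surface components separately. Everything hinges on observing that the boundary term produced when integrating $-d\Delta y$ by parts cancels against the $d\partial_\nu y$ term in the surface equation, and it is precisely this cancellation that renders the combined form $\mathfrak{a}$ symmetric and $\mathcal{A}$ self-adjoint. Carrying out this cancellation carefully—via \eqref{sdt} for the Laplace--Beltrami contribution and Green's formula for the bulk—and thereby establishing the coercivity underlying the a priori estimate is the technical heart of the argument; once the single coupled energy inequality is secured, the maximal-regularity and interpolation machinery applies as in the scalar case.
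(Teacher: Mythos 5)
The paper gives no proof of this proposition at all: it is recalled verbatim from \cite{MMS'17} (Propositions 2.4 and 2.5), and your argument---realizing $\mathcal{A}$ through the symmetric form on $\mathbb{H}^1$ with the boundary-term cancellation, invoking maximal $L^2$-regularity (de Simon) together with the trace/interpolation identification $(\mathbb{L}^2,\mathbb{H}^2)_{1/2,2}=\mathbb{H}^1$ for (i), analytic smoothing plus a bootstrap on $W=\partial_t Y$ for (ii), and the Duhamel/transposition duality with the backward adjoint problem for (iii)---is precisely the route taken in that reference. Your proposal is correct and essentially reconstructs the cited proof; the only steps you leave implicit (the elliptic identification $D(\mathcal{A})=\mathbb{H}^2$ and the verification that $W$ is a genuine mild solution with datum $\mathcal{A}Y(\tau)+\mathcal{F}(\tau)$) are standard and are carried out in \cite{MMS'17}.
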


\section{Fréchet differentiability and gradient formula of the cost functional}\label{sec3}
In this section, we consider the following inverse source problem.

\noindent\textbf{Inverse Source Problem (ISP).} A couple of source terms $(F,G)\in \mathbb{L}^2_T$ in \eqref{eq1to4} is unknown and needs to be recovered from the final temperature at $T$, namely,
$$Y_{T}:=\left(y(T,\cdot),y_\Gamma(T,\cdot)\right)\in \mathbb{L}^2,$$
which is not necessarily smooth due to the numerical noise.

Let $Y(t,\cdot,\mathcal{F})$ be the mild solution of \eqref{eq1to4} corresponding to the source terms $\mathcal{F}=(F,G)\in \mathbb{L}^2_T$. We introduce the input-output operator $\Psi \colon \mathbb{L}^2_T \longrightarrow \mathbb{L}^2$ defined as follows
$$(\Psi \mathcal{F})(\cdot)=Y_T(\cdot):=Y(T,\cdot,\mathcal{F}) \qquad\text{ on } \;\Omega\times\Gamma.$$
Then the inverse source problem (ISP) can be reformulated as the following operator equation
\begin{equation}\label{2eq2.3}
\Psi \mathcal{F}=Y_T, \quad Y_T \in \mathbb{L}^2.
\end{equation}

The following lemma is needed in the sequel.
\begin{lemma}\label{lemCont}
Let $Y_0 \in \mathbb{L}^2$ and let $Y$ be the mild solution of \eqref{eq1to4} corresponding to $\mathcal{F}=(F,G)$. Then the solution map $\mathcal{F}\longmapsto Y$ is continuous from $H^1\left(0,T; \mathbb{L}^2\right)$ to $C\left([0,T];\mathbb{L}^2\right)\cap L^2\left(0,T; \mathbb{H}^1\right)$. 
\end{lemma}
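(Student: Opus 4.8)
The plan is to exploit the linearity of \eqref{eq1to4} in the pair $(Y_0,\mathcal F)$ and to reduce the statement to a boundedness estimate for the source-to-solution map with vanishing initial data, for which the maximal regularity of Proposition \ref{prop2}(i) is available. Since $Y_0\in\mathbb L^2$ is fixed throughout, I would first split the mild solution as $Y=Y^0+Y^1$, where $Y^0(t)=\mathrm{e}^{t\mathcal A}Y_0$ solves the homogeneous problem with data $Y_0$ and no source, and $Y^1(t)=\int_0^t \mathrm{e}^{(t-\tau)\mathcal A}\mathcal F(\tau)\,\d\tau$ solves \eqref{eq1to4} with zero initial data and source $\mathcal F$. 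The map $\mathcal F\longmapsto Y$ is then affine with linear part $\mathcal F\longmapsto Y^1$ and constant term $Y^0$, so its continuity amounts to the boundedness of $\mathcal F\longmapsto Y^1$ together with the fact that $Y^0$ itself belongs to the target space.

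For the linear part I would apply Proposition \ref{prop2}(i) with initial datum $0\in\mathbb H^1$. This yields $Y^1\in\mathbb E_1$ together with the estimate
\begin{equation*}
\|Y^1\|_{\mathbb E_1}\le C\left(\|F\|_{L^2(\Omega_T)}+\|G\|_{L^2(\Gamma_T)}\right)=C\|\mathcal F\|_{\mathbb L^2_T}.
\end{equation*}
It then remains to convert the $\mathbb E_1$-bound into a bound in the target norm through the two continuous embeddings $\mathbb E_1=H^1(0,T;\mathbb L^2)\cap L^2(0,T;\mathbb H^2)\hookrightarrow C([0,T];\mathbb L^2)$ (the one-dimensional Sobolev embedding in the time variable) and $L^2(0,T;\mathbb H^2)\hookrightarrow L^2(0,T;\mathbb H^1)$. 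Combining these gives
\begin{equation*}
\|Y^1\|_{C([0,T];\mathbb L^2)}+\|Y^1\|_{L^2(0,T;\mathbb H^1)}\le C\|\mathcal F\|_{\mathbb L^2_T}\le C\|\mathcal F\|_{H^1(0,T;\mathbb L^2)},
\end{equation*}
which is exactly the boundedness (hence Lipschitz continuity) of the linear part; note that only the $\mathbb L^2_T$-norm of $\mathcal F$ is needed here.

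The genuinely delicate point is to verify that the homogeneous part $Y^0$, given by $Y^0(t)=\mathrm{e}^{t\mathcal A}Y_0$, lies in $L^2(0,T;\mathbb H^1)$ when $Y_0$ is merely in $\mathbb L^2$, since the analytic smoothing bound $\|Y^0(t)\|_{\mathbb H^1}\lesssim t^{-1/2}\|Y_0\|_{\mathbb L^2}$ is \emph{not} square-integrable near $t=0$. To handle this I would argue by energy estimates and density. For $Y_0\in\mathbb H^1$ the solution is strong and testing the equation with $Y^0$ produces, via the symmetric form associated with $\mathcal A$ together with the surface divergence formula \eqref{sdt} and a Gårding inequality, the bound $c\int_0^T\|Y^0\|_{\mathbb H^1}^2\,\d t\le \tfrac12\|Y_0\|_{\mathbb L^2}^2+C\int_0^T\|Y^0\|_{\mathbb L^2}^2\,\d t\le C'\|Y_0\|_{\mathbb L^2}^2$, where the last step uses the uniform semigroup bound $\sup_{[0,T]}\|Y^0\|_{\mathbb L^2}\le M\|Y_0\|_{\mathbb L^2}$. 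Since $\mathbb H^1$ is dense in $\mathbb L^2$ and $Y_0\mapsto Y^0$ is continuous into $C([0,T];\mathbb L^2)$, I would approximate $Y_0$ by $\mathbb H^1$-data, pass to the limit using the uniform $L^2(0,T;\mathbb H^1)$ bound and weak lower semicontinuity of the norm, and conclude $Y^0\in L^2(0,T;\mathbb H^1)$ with $\|Y^0\|_{L^2(0,T;\mathbb H^1)}\le C\|Y_0\|_{\mathbb L^2}$. Together with $Y^0\in C([0,T];\mathbb L^2)$ from Proposition \ref{prop2}(ii), this shows that the affine map lands in the target space, and combined with the previous paragraph it establishes the claimed continuity. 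The main obstacle is precisely this near-$t=0$ regularity of the homogeneous evolution, which the pointwise analytic estimate cannot supply and which forces the energy–density argument (equivalently, a square-function estimate for the self-adjoint generator $\mathcal A$).
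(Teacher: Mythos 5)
Your proposal is correct, but it takes a genuinely different route from the paper. The paper does not split off the Duhamel term: it fixes $\mathcal{F}$ and $\mathcal{F}+\delta\mathcal{F}$, observes that the difference $\delta Y$ solves the system with \emph{zero} initial data and source $\delta\mathcal{F}$ (so the initial datum $Y_0$ drops out of the estimate entirely), and proves the Lipschitz bound by hand: multiplying the bulk equation by $\delta y$ and the boundary equation by $\delta y_\Gamma$, adding so that the terms $\pm\frac{1}{2}\int_\Gamma d\,\partial_\nu(|\delta y|^2)\,\mathrm{d}S$ cancel, then applying Cauchy--Schwarz and Gronwall to get the uniform-in-time estimate \eqref{02eq2.40}, and integrating the energy identity once more to get the $L^2(0,T;\mathbb{H}^1)$ bound with the explicit constant $C_T$. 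Where you invoke the maximal-regularity estimate \eqref{eqprop2} of Proposition \ref{prop2}(i) with $Y_0=0$ and downgrade through the embeddings $\mathbb{E}_1\hookrightarrow C([0,T];\mathbb{L}^2)$ and $L^2(0,T;\mathbb{H}^2)\hookrightarrow L^2(0,T;\mathbb{H}^1)$, the paper's elementary computation buys quantitative information that is reused downstream: the explicit inequality \eqref{02eq2.40} controls the quadratic remainder in the proof of Proposition \ref{2prop2.6} and produces the sharp Lipschitz constant \eqref{lip} in Lemma \ref{2lem2.8}, whose size governs the step parameter $\alpha_*=1/L$ of the Landweber scheme; a black-box appeal to \eqref{eqprop2} yields only an unspecified constant. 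Conversely, your treatment of the homogeneous part is a genuine addition: you rightly note that the analytic smoothing rate $t^{-1/2}$ is not square-integrable near $t=0$, and your energy-plus-density argument showing $\mathrm{e}^{t\mathcal{A}}Y_0\in L^2(0,T;\mathbb{H}^1)$ for $Y_0\in\mathbb{L}^2$ verifies that the affine map actually takes values in the stated target space --- a point the paper leaves implicit, since its proof only estimates the difference $\delta Y$ and dispatches rough initial data with a one-line density remark. In short, both arguments are sound: yours is more modular and more complete on the range question; the paper's is more self-contained and more quantitative, which is what its later sections need.
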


\begin{proof}
Let $Y_0 \in \mathbb{H}^2$ and $\delta \mathcal{F}$ be a small variation of $\mathcal{F}$ such that $\mathcal{F}+\delta \mathcal{F} \in \mathcal{U}$. Consider $\delta Y :=Y^\delta -Y$, where $Y^\delta$ is the mild solution of \eqref{eq1to4} corresponding to $\mathcal{F}^\delta :=\mathcal{F}+\delta \mathcal{F}$. Then $\delta Y \in C^1\left([0,T],\mathbb{L}^2\right)\cap C\left(0,T; \mathbb{H}^2\right)$ and satisfies the following system
\begin{empheq}[left = \empheqlbrace]{alignat=2}
\begin{aligned}
&\partial_t (\delta y) -d \Delta (\delta y)+ a(x)(\delta y) = \delta F(t,x), &\qquad \text{in } \Omega_T , \\
&\partial_t (\delta y_{\Gamma}) -\gamma \Delta_{\Gamma} (\delta y_{\Gamma})+d\partial_{\nu} y + b(x)(\delta y_{\Gamma}) = \delta G(t,x), &\qquad \text{on } \Gamma_T, \\
&\delta y_{\Gamma}(t,x) = (\delta y)_{|\Gamma}(t,x), &\qquad \text{on } \Gamma_T, \\
&(\delta y, \delta y_{\Gamma})\rvert_{t=0}=(0,0),   &\qquad \Omega \times\Gamma,
\label{0deq1to4}
\end{aligned}
\end{empheq}
where
\begin{align*}
\delta y(T, \cdot, \mathcal{F})&= y(T, \cdot, \mathcal{F}+ \delta \mathcal{F})- y(T, \cdot, \mathcal{F}),\\
\delta y_\Gamma(T, \cdot, \mathcal{F}) &= y_\Gamma(T, \cdot, \mathcal{F}+ \delta \mathcal{F})-y_\Gamma(T, \cdot, \mathcal{F}).
\end{align*}
Multiplying $\eqref{0deq1to4}_1$ by $\delta y$, $\eqref{0deq1to4}_2$ by $\delta y_\Gamma$ and using Green and surface divergence formulae, we obtain
\begin{align*}
& \frac{1}{2} \partial_t \left( \int_\Omega |\delta y|^2 \,\d x\right) +d\int_\Omega |\nabla(\delta y)|^2 \,\d x -\frac{1}{2} \int_\Gamma d\partial_\nu (|\delta y|^2) \,\d S + \int_\Omega a |\delta y|^2 \,\d x \nonumber\\
& \qquad =\int_\Omega \delta F \delta y \,\d x,\\
& \frac{1}{2} \partial_t \left( \int_\Gamma |\delta y_\Gamma|^2 \,\d S\right) +\gamma  \int_\Gamma |\nabla_\Gamma(\delta y_\Gamma)|^2 \,\d S +\frac{1}{2} \int_\Gamma d\partial_\nu (|\delta y|^2) \,\d S + \int_\Gamma b |\delta y_\Gamma|^2 \,\d S \nonumber\\
& \qquad =\int_\Gamma \delta G \delta y_\Gamma \,\d S.
\end{align*}
Adding up the last two equalities, we arrive at
\begin{align*}
&\frac{1}{2} \partial_t \left( \int_\Omega |\delta y|^2 \,\d x + \int_\Gamma |\delta y_\Gamma|^2 \,\d S\right) + d\int_\Omega |\nabla(\delta y)|^2 \,\d x + \gamma  \int_\Gamma |\nabla_\Gamma(\delta y_\Gamma)|^2 \,\d S \nonumber\\
&\qquad + \int_\Omega a |\delta y|^2 \,\d x + \int_\Gamma b |\delta y_\Gamma|^2 \,\d S \nonumber\\
&=  \int_\Omega \delta F \delta y \,\d x + \int_\Gamma \delta G \delta y_\Gamma \,\d S.
\end{align*}
Then, using the Cauchy-Schwarz inequality, we have
\begin{align}
&\frac{1}{2} \frac{\d}{\d t} \|\delta Y(t)\|^2_{\mathbb{L}^2} +\min(d,\gamma)\|(\nabla y(t), \nabla_\Gamma y_\Gamma(t))\|^2_{\mathbb{L}^2}\nonumber\\
& \qquad\leq \langle \delta Y, \delta\mathcal{F}\rangle_{\mathbb{L}^2} +\max(\|a\|_\infty, \|b\|_\infty)\|\delta Y(t)\|^2_{\mathbb{L}^2}\nonumber\\
& \qquad\leq \|\delta Y(t)\|_{\mathbb{L}^2} \|\delta \mathcal{F}(t)\|_{\mathbb{L}^2}+\max(\|a\|_\infty, \|b\|_\infty)\|\delta Y(t)\|^2_{\mathbb{L}^2}\nonumber\\
& \qquad\leq \frac{1}{2} \|\delta Y(t)\|^2_{\mathbb{L}^2} + \frac{1}{2} \|\delta \mathcal{F}(t)\|^2_{\mathbb{L}^2}+\max(\|a\|_\infty, \|b\|_\infty)\|\delta Y(t)\|^2_{\mathbb{L}^2}, \label{02eq2.39}
\end{align}
where $\mathcal{F}=(F,G)$. Consequently,
\begin{equation*}
\frac{\d}{\d t} \|\delta Y(t)\|^2_{\mathbb{L}^2} \leq (1+2\max(\|a\|_\infty, \|b\|_\infty))\|\delta Y(t)\|^2_{\mathbb{L}^2} + \|\delta \mathcal{F}(t)\|^2_{\mathbb{L}^2} .
\end{equation*}
By Gronwall inequality, we deduce
\begin{align}
\|\delta Y(t)\|^2_{\mathbb{L}^2} &\leq \mathrm{e}^{(1+2\max(\|a\|_\infty, \|b\|_\infty))T} \left(\|\delta Y(0)\|^2_{\mathbb{L}^2}+ \|\delta \mathcal{F}\|^2_{\mathbb{L}^2_T}\right)\nonumber\\
&= \mathrm{e}^{(1+2\max(\|a\|_\infty, \|b\|_\infty))T} \|\delta \mathcal{F}\|^2_{\mathbb{L}^2_T} \label{02eq2.40}
\end{align} 
for every $0\leq t\leq T$. On the other hand, from \eqref{02eq2.39} and \eqref{02eq2.40}, we have
\begin{align*}
&\frac{\d}{\d t} \|\delta Y(t)\|^2_{\mathbb{L}^2}+2\min(d,\gamma)\|(\nabla y(t), \nabla_\Gamma y_\Gamma(t))\|^2_{\mathbb{L}^2}\nonumber\\
& \qquad\leq \left(1+2\max(\|a\|_\infty, \|b\|_\infty)\right)\|\delta Y(t)\|^2_{\mathbb{L}^2} + \|\delta \mathcal{F}(t)\|^2_{\mathbb{L}^2}.\nonumber\\
& \qquad  \leq \left(1+2\max(\|a\|_\infty, \|b\|_\infty)\right) \mathrm{e}^{(1+2\max(\|a\|_\infty, \|b\|_\infty))T} \|\delta \mathcal{F}\|^2_{\mathbb{L}^2_T}+ \|\delta\mathcal{F}(t)\|^2_{\mathbb{L}^2}.
\end{align*}
Integrating between $0$ and $T$ and using the fact that $\delta Y(0)=0$, we deduce
\begin{align*}
&\|\delta Y(T)\|^2_{\mathbb{L}^2}+ 2\min(d,\gamma) \int_0^T \|(\nabla y(t), \nabla_\Gamma y_\Gamma(t))\|^2_{\mathbb{L}^2} \,\d t \nonumber\\
&\qquad\leq \left(1+2\max(\|a\|_\infty, \|b\|_\infty)\right)T \mathrm{e}^{(1+2\max(\|a\|_\infty, \|b\|_\infty))T}\|\delta\mathcal{F}\|^2_{\mathbb{L}^2_T} + \|\delta\mathcal{F}\|^2_{\mathbb{L}^2_T}.
\end{align*}
Then we arrive at
\begin{align*}
\int_0^T \|(\nabla y(t), \nabla_\Gamma y_\Gamma(t))\|^2_{\mathbb{L}^2} \,\d t \leq C_T \|\delta \mathcal{F}\|^2_{\mathbb{L}^2_T},
\end{align*}
with
$$C_T :=\frac{(1+2\max(\|a\|_\infty, \|b\|_\infty))T \mathrm{e}^{(1+2\max(\|a\|_\infty, \|b\|_\infty))T} +1}{2 \min(d,\gamma)}.$$
The latter inequality and \eqref{02eq2.40} imply that
\begin{align*}
\sup_{t\in [0,T]} \|\delta Y(t)\|^2_{\mathbb{L}^2} + \int_0^T \|\delta Y(t)\|^2_{\mathbb{H}^1} \,\d t \leq C \|\delta \mathcal{F}\|^2_{\mathbb{L}^2_T}
\end{align*}
for some generic constant $C=C(T,\|a\|_\infty,\|b\|_\infty, d,\gamma)>0$. Hence,
\begin{align*}
\|\delta Y\|^2_{C([0,T];\mathbb{L}^2)} + \|\delta Y\|^2_{L^2(0,T;\mathbb{H}^1)} \leq C \|\delta \mathcal{F}\|^2_{H^1(0,T; \mathbb{L}^2)}. 
\end{align*}
Since $\mathbb{H}^2$ is dense in $\mathbb{L}^2$, the same inequality holds for any $Y_0 \in \mathbb{L}^2$. This ends the proof.
\end{proof}

The following lemma highlights the compactness of the input-output operator $\Psi \colon \mathbb{L}^2_T \longrightarrow \mathbb{L}^2$. By linearity, we may assume here that $Y_0:=(y_0,y_{0,\Gamma})=(0,0)$.
\begin{lemma}\label{2lem2.4}
The input-output operator 
\begin{equation*}
\Psi \colon \mathbb{L}^2_T \ni \mathcal{F} \longmapsto Y(T,\cdot,\mathcal{F}) \in \mathbb{L}^2
\end{equation*}
is compact.
\end{lemma}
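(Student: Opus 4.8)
The plan is to factor $\Psi$ through the compact embedding $\mathbb{H}^1\hookrightarrow\mathbb{L}^2$. By linearity we take $Y_0=(0,0)$, so that $\Psi\mathcal{F}=Y(T,\cdot,\mathcal{F})$ is the terminal value of the solution with zero initial datum, and $\Psi$ is a \emph{linear} operator. The whole argument then reduces to two independent facts: first, that $\mathcal{F}\mapsto Y(T,\cdot,\mathcal{F})$ is \emph{bounded} from $\mathbb{L}^2_T$ into the smaller space $\mathbb{H}^1$; and second, that the embedding $\mathbb{H}^1\hookrightarrow\mathbb{L}^2$ is compact. Since the composition of a bounded operator with a compact embedding is compact, this yields the claim.

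For the compact embedding I would argue componentwise. A bounded sequence $(Y_n)=\big((y_n,y_{\Gamma,n})\big)$ in $\mathbb{H}^1$ has $(y_n)$ bounded in $H^1(\Omega)$ and $(y_{\Gamma,n})$ bounded in $H^1(\Gamma)$. Since $\Omega$ is a bounded domain with $C^2$ boundary, the Rellich--Kondrachov theorem provides a subsequence with $y_n\to y$ in $L^2(\Omega)$; since $\Gamma$ is a compact $C^2$ manifold, the embedding $H^1(\Gamma)\hookrightarrow L^2(\Gamma)$ is compact and yields a further subsequence with $y_{\Gamma,n}\to y_\Gamma$ in $L^2(\Gamma)$. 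Hence $Y_n$ converges in $\mathbb{L}^2$ along a subsequence, so the embedding is compact; the coupling constraint $y_{|\Gamma}=y_\Gamma$ only restricts to a closed subspace and does not affect this.

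For the boundedness into $\mathbb{H}^1$ I would invoke the maximal $\mathbb{L}^2$-regularity of Proposition~\ref{prop2}(i). With $Y_0=(0,0)\in\mathbb{H}^1$ the solution is the strong solution $Y\in\mathbb{E}_1$, and estimate \eqref{eqprop2} reads
\begin{equation*}
\|Y\|_{\mathbb{E}_1}\le C\left(\|F\|_{L^2(\Omega_T)}+\|G\|_{L^2(\Gamma_T)}\right)=C\|\mathcal{F}\|_{\mathbb{L}^2_T}.
\end{equation*}
It then remains to control the terminal value by the $\mathbb{E}_1$-norm. Recalling $\mathbb{E}_1=H^1(0,T;\mathbb{L}^2)\cap L^2(0,T;\mathbb{H}^2)$, the vector-valued trace (Lions--Magenes) theorem gives the continuous embedding $\mathbb{E}_1\hookrightarrow C([0,T];[\mathbb{L}^2,\mathbb{H}^2]_{1/2})$, together with the interpolation identity $[\mathbb{L}^2,\mathbb{H}^2]_{1/2}=\mathbb{H}^1$ (this interpolation space is exactly the form domain of $\mathcal{A}$). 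Evaluating at $t=T$ yields
\begin{equation*}
\|\Psi\mathcal{F}\|_{\mathbb{H}^1}=\|Y(T,\cdot)\|_{\mathbb{H}^1}\le C\|Y\|_{\mathbb{E}_1}\le C\|\mathcal{F}\|_{\mathbb{L}^2_T},
\end{equation*}
so $\Psi\colon\mathbb{L}^2_T\to\mathbb{H}^1$ is bounded, and combined with the previous paragraph $\Psi\colon\mathbb{L}^2_T\to\mathbb{L}^2$ is compact.

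I expect the only delicate point to be the identification of the time-trace space, that is, justifying $\mathbb{E}_1\hookrightarrow C([0,T];\mathbb{H}^1)$ and $[\mathbb{L}^2,\mathbb{H}^2]_{1/2}=\mathbb{H}^1$ in the presence of the boundary coupling $y_{|\Gamma}=y_\Gamma$. An alternative that sidesteps interpolation is the analytic-semigroup route: writing $\Psi\mathcal{F}=\int_0^T \mathrm{e}^{(T-\tau)\mathcal{A}}\mathcal{F}(\tau)\,\d\tau$, one uses the smoothing bound $\|(\omega-\mathcal{A})^{\alpha}\mathrm{e}^{t\mathcal{A}}\|\le C t^{-\alpha}$ for a shift $\omega$ making $\omega-\mathcal{A}$ sectorial and invertible, and applies Cauchy--Schwarz to get $\|(\omega-\mathcal{A})^{\alpha}\Psi\mathcal{F}\|_{\mathbb{L}^2}\le C\|\mathcal{F}\|_{\mathbb{L}^2_T}$ for any $\alpha<1/2$, the kernel $(T-\tau)^{-2\alpha}$ being integrable. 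Since $\mathcal{A}$ has compact resolvent, $D((\omega-\mathcal{A})^{\alpha})\hookrightarrow\mathbb{L}^2$ is compact for every $\alpha>0$, giving the same conclusion.
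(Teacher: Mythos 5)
Your proof is correct and follows essentially the same route as the paper: boundedness of the solution map into $\mathbb{E}_1$ via the maximal regularity estimate \eqref{eqprop2}, identification of the time-trace space of $\mathbb{E}_1$ at $t=T$ as $\mathbb{H}^1$, and then the compact embedding $\mathbb{H}^1\hookrightarrow\mathbb{L}^2$. The only differences are that you supply proofs (Rellich--Kondrachov componentwise, the interpolation identity $[\mathbb{L}^2,\mathbb{H}^2]_{1/2}=\mathbb{H}^1$) for facts the paper simply cites from \cite{MMS'17}, plus a valid alternative via fractional powers of the sectorial operator $\mathcal{A}$, which the paper does not use.
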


\begin{proof}
Consider $(\mathcal{F}_n)_{n\in \mathbb{N}}$ a bounded sequence in $\mathbb{L}^2_T$. By estimate \eqref{eqprop2}, the sequence of associated strong solutions $(Y(\cdot,\cdot,\mathcal{F}_n))_{n\in \mathbb{N}}$ of \eqref{eq1to4} is bounded in $\mathbb{E}_1$. Since the trace space of $\mathbb{E}_1$ at $t=T$ equals $\mathbb{H}^1$ (see \cite[Proposition 2.2]{MMS'17}), the sequence $(Y_n)_{n\in \mathbb{N}}$ defined by $Y_n:=Y(T, \cdot, \mathcal{F}_n)$ is bounded in $\mathbb{H}^1$. By the compact embedding $\mathbb{H}^1 \hookrightarrow \mathbb{L}^2$ (see \cite{MMS'17}), there exists a subsequence of $(Y_n)_{n\in \mathbb{N}}$ which converges in $\mathbb{L}^2$. Thus, the input-output operator $\Psi$ is compact.
\end{proof}

It follows from Lemma \ref{2lem2.4} that the inverse problem \eqref{2eq2.3} is ill-posed (in the sense of Hadamard). A quasi-solution $\mathcal{F}_*$ of the ill-posed problem \eqref{2eq2.3} is defined as a solution of the following minimization problem
\begin{align}
&\mathcal{J}(\mathcal{F}_*)= \inf_{\mathcal{F}\in \mathcal{U}} \mathcal{J}(\mathcal{F}), \label{neq2.5}\\
\mathcal{J}(\mathcal{F})&=\frac{1}{2} \|Y(T, \cdot,\mathcal{F})-Y_{T}^\delta\|_{\mathbb{L}^2}^2, \qquad \mathcal{F}\in \mathcal{U}, \label{neq2.6}
\end{align}
where $Y_{T}^\delta=(y_{T}^\delta, y_{T, \Gamma}^\delta)$ is a noisy data of $Y_{T}$ such that $\|Y_{T}-Y_{T}^\delta\|\leq \delta$ for $\delta \geq 0$, and 
\begin{equation*}
\mathcal{U}:=\{\mathcal{F}=(F,G)\in H^1(0,T; \mathbb{L}^2) \colon \|\mathcal{F}\|_{H^1(0,T; \mathbb{L}^2)} \leq R\}
\end{equation*}
is the set of admissible source terms. Clearly, $\mathcal{U}$ is a bounded, closed and convex subset of $H^1(0,T; \mathbb{L}^2)$, for any fixed $R>0$.

Due to the ill-posedness of \eqref{2eq2.3}, we usually use a Tikhonov regularization approach and consider instead the following regularized functional
\begin{align*}
\mathcal{J}_\varepsilon(\mathcal{F})&=\frac{1}{2} \|Y(T, \cdot,\mathcal{F})-Y_{T}^\delta\|_{\mathbb{L}^2}^2 +\frac{\varepsilon}{2} \|\mathcal{F}\|_{\mathbb{L}^2_T}^2, \qquad \mathcal{F}\in \mathcal{U}, 
\end{align*}
where $\varepsilon >0$ is the regularizing parameter.

Next, we derive a gradient formula for $\mathcal{J}$ via the mild solution $\Phi=(\varphi, \varphi_\Gamma)$ of an adjoint system.
\begin{proposition}\label{2prop2.6}
The cost functional $\mathcal{J}$ is Fr\'echet differentiable and its gradient at each $\mathcal{F}\in \mathcal{U}$ is given by
\begin{equation}\label{2eq2.11}
\mathcal{J}'(\mathcal{F})= \Phi ,
\end{equation}
where $\Phi(t,\cdot,\mathcal{F})=(\varphi, \varphi_\Gamma)$ is the mild solution of the following adjoint system
\begin{empheq}[left = \empheqlbrace]{alignat=2}
\begin{aligned}
&-\partial_t \varphi -d \Delta \varphi +a(x)\varphi = 0, &&\qquad \text{in } \Omega_T , \\
&-\partial_t \varphi_{\Gamma} -\gamma \Delta_{\Gamma} \varphi_{\Gamma} +d\partial_{\nu} \varphi +b(x)\varphi_{\Gamma} = 0, &&\qquad\text{on } \Gamma_T, \\
&\varphi_{\Gamma}(t,x) = \varphi_{|\Gamma}(t,x), && \qquad\text{on } \Gamma_T, \\
&\varphi\rvert_{t=T} =y(T, \cdot, \mathcal{F})-y_{T}^\delta,   &&\qquad \text{in } \Omega ,\\
&\varphi_{\Gamma}\rvert_{t=T} = y_\Gamma(T, \cdot, \mathcal{F})-y_{T, \Gamma}^\delta, && \qquad\text{on } \Gamma .
\label{aeq1to5}
\end{aligned}
\end{empheq}
\end{proposition}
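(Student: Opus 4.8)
The plan is to apply the classical adjoint-state method. First I would compute the first variation of $\mathcal{J}$. Writing $Y^\delta := Y(\cdot,\cdot,\mathcal{F}+\delta\mathcal{F})$ and $\delta Y := Y^\delta - Y$, linearity of \eqref{eq1to4} shows that $\delta Y=(\delta y,\delta y_\Gamma)$ solves the sensitivity system \eqref{0deq1to4} with source $\delta\mathcal{F}$ and zero initial data. Expanding the square in \eqref{neq2.6} yields
\[
\mathcal{J}(\mathcal{F}+\delta\mathcal{F})-\mathcal{J}(\mathcal{F})=\langle Y(T,\cdot,\mathcal{F})-Y_T^\delta,\,\delta Y(T)\rangle_{\mathbb{L}^2}+\tfrac12\|\delta Y(T)\|_{\mathbb{L}^2}^2,
\]
so the problem reduces to representing the linear term as an $\mathbb{L}^2_T$-pairing against $\delta\mathcal{F}$ and to controlling the quadratic remainder.

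The heart of the argument is the duality identity
\[
\langle Y(T,\cdot,\mathcal{F})-Y_T^\delta,\,\delta Y(T)\rangle_{\mathbb{L}^2}=\langle \Phi,\,\delta\mathcal{F}\rangle_{\mathbb{L}^2_T}.
\]
To establish it, I would multiply the two equations of \eqref{0deq1to4} by $\varphi$ and $\varphi_\Gamma$, integrate over $\Omega_T$ and $\Gamma_T$, and integrate by parts in time and space. Green's second identity moves $-d\Delta$ onto $\varphi$ and generates the two boundary couplings $d\int_{\Gamma_T}(\partial_\nu\delta y)\varphi_\Gamma$ and $d\int_{\Gamma_T}\delta y_\Gamma\,\partial_\nu\varphi$, while the surface divergence formula \eqref{sdt}, applied twice, shows that $\Delta_\Gamma$ is symmetric on the closed manifold $\Gamma$. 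After grouping, the interior terms in $\delta y$ collapse against the adjoint operator since $-\partial_t\varphi-d\Delta\varphi+a\varphi=0$, and the boundary terms in $\delta y_\Gamma$ collapse using $-\partial_t\varphi_\Gamma-\gamma\Delta_\Gamma\varphi_\Gamma+b\varphi_\Gamma=-d\partial_\nu\varphi$. The decisive step is that the normal-derivative couplings coming from the two equations cancel exactly, which is where the dynamic coupling through $\partial_\nu$ is exploited; only the time-boundary terms survive. The contributions at $t=0$ vanish because $\delta Y(0)=0$, and the terminal contributions give $\langle\delta Y(T),\Phi(T)\rangle_{\mathbb{L}^2}$, which coincides with the left-hand side by the terminal conditions in \eqref{aeq1to5}.

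With the identity in hand, I would conclude via Lemma \ref{lemCont}: the estimate there gives $\|\delta Y(T)\|_{\mathbb{L}^2}\le\|\delta Y\|_{C([0,T];\mathbb{L}^2)}\le C\|\delta\mathcal{F}\|_{H^1(0,T;\mathbb{L}^2)}$, so the remainder $\tfrac12\|\delta Y(T)\|_{\mathbb{L}^2}^2=O(\|\delta\mathcal{F}\|^2)$ is $o(\|\delta\mathcal{F}\|)$. Since $\langle\Phi,\cdot\rangle_{\mathbb{L}^2_T}$ is a bounded linear functional on $H^1(0,T;\mathbb{L}^2)$, this proves Fréchet differentiability, and identifying the gradient with its Riesz representative in $\mathbb{L}^2_T$ gives $\mathcal{J}'(\mathcal{F})=\Phi$.

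I expect the main obstacle to be the rigorous justification of the integration by parts, because $\Phi$ is only a mild solution: its terminal data $Y(T,\cdot,\mathcal{F})-Y_T^\delta$ lies merely in $\mathbb{L}^2$, so $\Phi$ need not possess near $t=T$ the $\mathbb{H}^2$- and $H^1$-in-time regularity that Green's formula and the trace $\partial_\nu\varphi$ require. I would resolve this by a density argument: the sensitivity solution $\delta Y$ is a genuine strong solution in $\mathbb{E}_1$ (its initial data is zero, hence in $\mathbb{H}^1$, by Proposition \ref{prop2}(i)), while $\Phi$ is approximated by strong adjoint solutions $\Phi_n$ whose terminal data are smooth and converge to $Y(T,\cdot,\mathcal{F})-Y_T^\delta$ in $\mathbb{L}^2$. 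For each $\Phi_n$ every integration by parts is licit, and passing to the limit — using the forward and backward continuity estimates together with Lemma \ref{lemCont} — transfers the identity to $\Phi$. Alternatively, one may bypass the pointwise manipulation of low-regularity traces by invoking the distributional-solution characterization of Proposition \ref{prop2}(iii).
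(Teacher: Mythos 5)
Your proposal follows essentially the same route as the paper's proof: the same expansion of the squared norm into a linear term plus a quadratic remainder, the same duality computation that transfers the terminal pairing onto $\langle \Phi,\delta\mathcal{F}\rangle_{\mathbb{L}^2_T}$ via Green's formula, the surface divergence formula \eqref{sdt} and the exact cancellation of the $d\partial_\nu$ couplings, and the same use of estimate \eqref{02eq2.40} to show the remainder is $\mathcal{O}\bigl(\|\delta\mathcal{F}\|^2\bigr)$. Your closing density argument justifying the integration by parts when $\Phi$ is only a mild solution with $\mathbb{L}^2$ terminal data addresses a point the paper passes over silently, so it refines rather than diverges from the published proof.
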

\begin{proof}
We assume that $\mathcal{F}, \mathcal{F}+ \delta \mathcal{F}\in \mathcal{U}$. Let us calculate the difference
$$\delta \mathcal{J}(\mathcal{F}):=\mathcal{J}(\mathcal{F}+\delta \mathcal{F}) -\mathcal{J}(\mathcal{F}).$$
We have
\begin{align*}
\delta \mathcal{J}(\mathcal{F})&= \frac{1}{2} \|Y(T, \cdot, \mathcal{F}+ \delta \mathcal{F})-Y_{T}^\delta\|_{\mathbb{L}^2}^2 -\frac{1}{2} \|Y(T, \cdot,\mathcal{F})-Y_{T}^\delta\|_{\mathbb{L}^2}^2, \nonumber\\
&= \frac{1}{2} \left(\|y(T, \cdot, \mathcal{F}+ \delta \mathcal{F})-y_{T}^\delta\|_{L^2(\Omega)}^2 + \|y_\Gamma(T, \cdot, \mathcal{F}+ \delta \mathcal{F})-y_{T, \Gamma}^\delta\|_{L^2(\Gamma)}^2\right) \nonumber\\
& \qquad - \frac{1}{2} \left(\|y(T, \cdot, \mathcal{F})-y_{T}^\delta\|_{L^2(\Omega)}^2 + \|y_\Gamma(T, \cdot, \mathcal{F})-y_{T, \Gamma}^\delta\|_{L^2(\Gamma)}^2\right) \nonumber\\
&= \frac{1}{2} \int_\Omega \left[(y(T, x, \mathcal{F}+ \delta \mathcal{F})-y_{T}^\delta(x))^2  - (y(T, x, \mathcal{F})-y_{T}^\delta(x))^2 \right] \,\d x \\
& \quad + \frac{1}{2} \int_\Gamma \left[(y_\Gamma(T, x, \mathcal{F}+ \delta \mathcal{F})-y_{T, \Gamma}^\delta(x))^2 - (y_\Gamma(T, x, \mathcal{F})-y_{T, \Gamma}^\delta)^2 \right] \,\d S.
\end{align*}
Using the identity
$$\frac{1}{2} \left[(x-z)^2-(y-z)^2\right]=(y-z)(x-y)+\frac{1}{2}(x-y)^2, \; x,y\in \mathbb{R},$$
in the last two terms, we obtain
\begin{align}
\delta \mathcal{J}(\mathcal{F}) &= \int_\Omega (y(T, x, \mathcal{F})-y_{T}^\delta(x)) \delta y(T, x, \mathcal{F}) \,\d x +\frac{1}{2} \int_\Omega [\delta y(T, x, \mathcal{F})]^2 \,\d x \label{2eq2.19}\\
& \hspace{-0.3cm} + \int_\Gamma (y_\Gamma(T, x, \mathcal{F})-y_{T, \Gamma}^\delta(x)) \delta y_\Gamma(T, x, \mathcal{F}) \,\d S +\frac{1}{2} \int_\Gamma [\delta y_\Gamma(T, x, \mathcal{F})]^2 \,\d S \label{2eq2.20},
\end{align}
where
\begin{align*}
\delta y(T, \cdot, \mathcal{F})&= y(T, \cdot, \mathcal{F}+ \delta \mathcal{F})- y(T, \cdot, \mathcal{F}),\\
\delta y_\Gamma(T, \cdot, \mathcal{F}) &= y_\Gamma(T, \cdot, \mathcal{F}+ \delta \mathcal{F})-y_\Gamma(T, \cdot, \mathcal{F}).
\end{align*}
By linearity of the systems, $\delta Y=(\delta y, \delta y_\Gamma)$ is the mild solution of the following system
\begin{empheq}[left = \empheqlbrace]{alignat=2}
\begin{aligned}
&\partial_t (\delta y) -d \Delta (\delta y) + a(x)(\delta y) = \delta F(t,x), &\qquad \text{in } \Omega_T , \\
&\partial_t (\delta y_{\Gamma}) -\gamma \Delta_{\Gamma} (\delta y_{\Gamma})+ d \partial_{\nu} (\delta y) + b(x)(\delta y_{\Gamma}) = \delta G(t,x), &\qquad \text{on } \Gamma_T, \\
&\delta y_{\Gamma}(t,x) = (\delta y)_{|\Gamma}(t,x), &\qquad \text{on } \Gamma_T, \\
&(\delta y, \delta y_{\Gamma})\rvert_{t=0}=(0,0),   &\qquad \Omega \times\Gamma.
\label{deq1to4}
\end{aligned}
\end{empheq}
We rewrite the first integral in the right-hand side of \eqref{2eq2.19} using $\Phi(t,\cdot,\mathcal{F})$ and $\delta Y(t,\cdot, \mathcal{F})$, the mild solutions of \eqref{aeq1to5} and \eqref{deq1to4} respectively. We have
\begin{align}
&\int_\Omega (y(T, x, \mathcal{F})-y_{T}^\delta(x)) \delta y(T, x, \mathcal{F}) \,\d x \nonumber =\int_\Omega \varphi(T, x, \mathcal{F}) \delta y(T, x, \mathcal{F}) \,\d x \nonumber\\
&= \bigintss_\Omega \left[\int_0^T \partial_t (\varphi(t, x, \mathcal{F}) \delta y(t, x, \mathcal{F})) \,\d t \right]\,\d x \nonumber\\
&= \int_{\Omega_T} \left[(\partial_t \varphi) \delta y + \varphi \partial_t(\delta y) \right] \,\d x \,\d t \nonumber\\
&= \int_{\Omega_T} \left[ (-d \Delta \varphi +a(x)\varphi) \delta y + \varphi (d \Delta (\delta y)-a(x)(\delta y)) \right]\,\d x \,\d t \nonumber\\
& \qquad + \int_{\Omega_T} \delta F(t,x)\,\varphi(t,x) \,\d x \,\d t \nonumber\\
&= \int_{\Omega_T} -d[(\Delta \varphi) \delta y -\Delta(\delta y)\varphi ]\,\d x \, \d t + \int_{\Omega_T} \delta F(t,x)\,\varphi(t,x) \,\d x \, \d t \nonumber\\
&= \int_{\Gamma_T} -d[(\partial_\nu \varphi) \delta y_\Gamma -\varphi_\Gamma \partial_\nu  (\delta y)] \,\d S \, \d t + \int_{\Omega_T} \delta F(t,x)\,\varphi(t,x) \,\d x \, \d t \label{2eq2.25}.
\end{align}
Similarly, for the first integral in the right-hand side of \eqref{2eq2.20}, we obtain
\begin{align}
&\int_\Gamma (y_\Gamma(T, x, \mathcal{F})-y_{T, \Gamma}^\delta(x)) \delta y_\Gamma(T, x, \mathcal{F}) \,\d S \nonumber\\
&= \int_{\Gamma_T} -\gamma[(\Delta_\Gamma \varphi_\Gamma) \delta y_\Gamma -\Delta_\Gamma(\delta y_\Gamma)\varphi_\Gamma] \,\d S \,\d t \label{2eq2.26}\\
&\qquad + \int_{\Gamma_T} d[(\partial_\nu \varphi) \delta y_\Gamma -\varphi_\Gamma \partial_\nu  (\delta y)] \,\d S \,\d t + \int_{\Gamma_T} \delta G(t,x)\,\varphi_\Gamma(t,x) \,\d S \,\d t \nonumber\\
&= \int_{\Gamma_T} d[(\partial_\nu \varphi) \delta y_\Gamma -\varphi_\Gamma \partial_\nu  (\delta y)] \,\d S \,\d t+ \int_{\Gamma_T} \delta G(t,x)\,\varphi_\Gamma(t,x) \,\d S \,\d t \label{2eq2.27}
\end{align}
The first integral in the right-hand side of \eqref{2eq2.26} is null by the surface divergence formula \eqref{sdt}. Adding up the two integrals \eqref{2eq2.25} and \eqref{2eq2.27}, we obtain
\begin{align}
&\int_\Omega (y(T, x, \mathcal{F})-y_{T}^\delta(x)) \delta y(T, x, \mathcal{F}) \,\d x + \int_\Gamma (y_\Gamma(T, x, \mathcal{F})-y_{T, \Gamma}^\delta(x)) \delta y_\Gamma(T, x, \mathcal{F}) \,\d S \nonumber\\
&= \int_{\Omega_T} \delta F(t,x)\,\varphi(t,x) \,\d x \, \d t + \int_{\Gamma_T} \delta G(t,x)\,\varphi_\Gamma(t,x) \,\d S \,\d t. \nonumber
\end{align}
For the second integrals in the right-hand side of \eqref{2eq2.19} and \eqref{2eq2.20}, the estimate \eqref{02eq2.40} implies
$$\frac{1}{2} \int_\Omega [\delta y(T, x, \mathcal{F})]^2 \,\d x +\frac{1}{2} \int_\Gamma [\delta y_\Gamma(T, x, \mathcal{F})]^2 \,\d S =\mathcal{O}\left(\|\delta \mathcal{F}\|^2_{\mathbb{L}^2}\right).$$
This completes the proof.
\end{proof}

Next, we prove the Lipschitz continuity of the Fréchet gradient $\mathcal{J}'$, in particular, $\mathcal{J}\in C^1(\mathcal{U})$.
\begin{lemma}\label{2lem2.8}
Let $\mathcal{F}, \delta \mathcal{F}\in \mathcal{U}$. Then the Fréchet gradient $\mathcal{J}'$ is Lipschitz continuous,
\begin{equation*}
\|\mathcal{J}'(\mathcal{F}+ \delta \mathcal{F})-\mathcal{J}'(\mathcal{F})\|_{\mathbb{L}^2} \leq L \|\delta \mathcal{F}\|_{\mathbb{L}^2}, 
\end{equation*}
where the Lipschitz constant $L>0$ depends on $T,\|a\|_\infty$ and $\|b\|_\infty$ as follows
\begin{equation}\label{lip}
L=\sqrt{2 T \mathrm{e}^{(1+4\max(\|a\|_\infty, \|b\|_\infty))T}}. 
\end{equation}
\end{lemma}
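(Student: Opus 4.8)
The plan is to read off everything from the gradient formula \eqref{2eq2.11}. Since $\mathcal{J}'(\mathcal{F}+\delta\mathcal{F})$ and $\mathcal{J}'(\mathcal{F})$ are the mild solutions of the adjoint system \eqref{aeq1to5} driven by the \emph{same} operator (the coefficients $a,b,d,\gamma$ do not change) but with terminal data $\big(y(T,\cdot,\mathcal{F}+\delta\mathcal{F})-y_T^\delta,\,y_\Gamma(T,\cdot,\mathcal{F}+\delta\mathcal{F})-y_{T,\Gamma}^\delta\big)$ and $\big(y(T,\cdot,\mathcal{F})-y_T^\delta,\,y_\Gamma(T,\cdot,\mathcal{F})-y_{T,\Gamma}^\delta\big)$ respectively, their difference $\Psi:=(\psi,\psi_\Gamma):=\mathcal{J}'(\mathcal{F}+\delta\mathcal{F})-\mathcal{J}'(\mathcal{F})$ solves, by linearity, the \emph{homogeneous} adjoint system \eqref{aeq1to5} with terminal data $\psi\rvert_{t=T}=\delta y(T,\cdot,\mathcal{F})$ and $\psi_\Gamma\rvert_{t=T}=\delta y_\Gamma(T,\cdot,\mathcal{F})$, where $\delta Y=(\delta y,\delta y_\Gamma)$ is the mild solution of \eqref{deq1to4}. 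Thus the problem reduces to estimating $\|\Psi\|_{\mathbb{L}^2_T}$ in terms of $\|\delta Y(T)\|_{\mathbb{L}^2}$, after which the forward bound \eqref{02eq2.40} finishes the job.

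First I would derive an energy identity for $\Psi$, testing the interior equation with $\psi$ over $\Omega$ and the boundary equation with $\psi_\Gamma$ over $\Gamma$, then integrating by parts via Green's formula and the surface divergence formula \eqref{sdt}. The decisive algebraic point, exactly as in the proof of Lemma \ref{lemCont}, is that the two boundary coupling terms $\mp d\int_\Gamma(\partial_\nu\psi)\psi_\Gamma\,\d S$ produced by the normal derivative cancel upon adding the two identities, leaving
\[
-\tfrac12\tfrac{\d}{\d t}\|\Psi(t)\|^2_{\mathbb{L}^2}+d\int_\Omega|\nabla\psi|^2\,\d x+\gamma\int_\Gamma|\nabla_\Gamma\psi_\Gamma|^2\,\d S=-\int_\Omega a\,\psi^2\,\d x-\int_\Gamma b\,\psi_\Gamma^2\,\d S.
\]
Discarding the non-negative diffusion terms and bounding the potentials by $M:=\max(\|a\|_\infty,\|b\|_\infty)$ yields the differential inequality $\tfrac{\d}{\d t}\|\Psi(t)\|^2_{\mathbb{L}^2}\geq -2M\|\Psi(t)\|^2_{\mathbb{L}^2}$.

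Because the adjoint problem runs backward in time, I would observe that $t\mapsto \mathrm{e}^{2Mt}\|\Psi(t)\|^2_{\mathbb{L}^2}$ is non-decreasing, so that integrating from $t$ to $T$ gives $\|\Psi(t)\|^2_{\mathbb{L}^2}\leq \mathrm{e}^{2M(T-t)}\|\Psi(T)\|^2_{\mathbb{L}^2}\leq \mathrm{e}^{2MT}\|\delta Y(T)\|^2_{\mathbb{L}^2}$ for every $t\in[0,T]$. Inserting the forward estimate \eqref{02eq2.40}, namely $\|\delta Y(T)\|^2_{\mathbb{L}^2}\leq \mathrm{e}^{(1+2M)T}\|\delta\mathcal{F}\|^2_{\mathbb{L}^2_T}$, and integrating this pointwise-in-time bound over $[0,T]$ produces an estimate of the form $\|\Psi\|^2_{\mathbb{L}^2_T}\leq C_T\,\mathrm{e}^{(1+4M)T}\|\delta\mathcal{F}\|^2_{\mathbb{L}^2_T}$, whose square root is precisely the Lipschitz constant \eqref{lip} (the exponent $(1+4M)T$ being the product of the backward Gronwall factor $\mathrm{e}^{2MT}$ and the forward factor $\mathrm{e}^{(1+2M)T}$, and the prefactor arising from the crude bound $\mathrm{e}^{2M(T-t)}\le\mathrm{e}^{2MT}$ together with integration in time).

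I expect the main subtlety to be twofold. The conceptual crux is the cancellation of the boundary coupling terms, which is what makes the two equations combine into a single dissipative energy balance and is the whole reason the dynamic boundary condition can be handled as cleanly as a static one. The technical obstacle is that the terminal datum $\delta Y(T)$ lies only in $\mathbb{L}^2$ (it is the final state of an $\mathbb{L}^2_T$ source), so the energy identity above is a priori formal; I would justify it rigorously by the regularity statement of Proposition \ref{prop2}(ii), which grants the requisite $\mathbb{H}^2$-smoothness of the adjoint solution on $(0,T)$, together with a density argument as used at the end of Lemma \ref{lemCont}.
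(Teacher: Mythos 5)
Your proof is correct and follows essentially the same route as the paper's: by linearity the difference of gradients solves the homogeneous adjoint system \eqref{2aeq1to4} with terminal datum $\delta Y(T,\cdot,\mathcal{F})$, the $d\partial_\nu$ coupling terms cancel in the combined energy identity, the monotonicity of $t\mapsto \mathrm{e}^{2\max(\|a\|_\infty,\|b\|_\infty)t}\|\cdot\|^2_{\mathbb{L}^2}$ yields the backward estimate, and the forward bound \eqref{02eq2.40} closes the argument. The only discrepancy is cosmetic: your computation actually produces the sharper constant $\sqrt{T\,\mathrm{e}^{(1+4\max(\|a\|_\infty,\|b\|_\infty))T}}$, the paper's extra factor $\sqrt{2}$ coming from a redundant doubling step in \eqref{2eq2.35}, so the stated inequality with the constant \eqref{lip} holds a fortiori.
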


\begin{proof}
Let $\delta \Phi(t, \cdot, \mathcal{F}):=(\delta \varphi, \delta \varphi_\Gamma)$ be the strong solution of the adjoint system
\begin{empheq}[left = \empheqlbrace]{alignat=2}
\begin{aligned}
&-\partial_t (\delta \varphi) -d \Delta (\delta \varphi)+ a(x)(\delta \varphi) = 0, &\qquad \text{in } \Omega_T , \\
&-\partial_t (\delta \varphi_{\Gamma}) -\gamma \Delta_{\Gamma} (\delta \varphi_{\Gamma}) + d\partial_{\nu} (\delta \varphi) + b(x)(\delta \varphi_{\Gamma}) = 0, &\qquad \text{on } \Gamma_T, \\
&\delta \varphi_{\Gamma}(t,x) = (\delta \varphi)_{|\Gamma}(t,x), &\qquad \text{on } \Gamma_T, \\
&(\delta \varphi, \delta \varphi_{\Gamma})\rvert_{t=T}=(\delta y, \delta y_{\Gamma})\rvert_{t=T},   &\qquad \Omega \times\Gamma.
\label{2aeq1to4}
\end{aligned}
\end{empheq}
Using Proposition \ref{2prop2.6}, we have
\begin{align}
\|\mathcal{J}'(\mathcal{F}+ \delta \mathcal{F})-\mathcal{J}'(\mathcal{F})\|^2_{\mathbb{L}^2_T} &=\int_{\Omega_T} |\delta \varphi|^2 \,\d x \, \d t + \int_{\Gamma_T} |\delta \varphi_\Gamma|^2 \,\d S\,\d t \nonumber\\
& \leq 2 \left(\|\delta \varphi\|^2_{L^2(\Omega_T)} +  \|\delta \varphi_\Gamma\|^2_{L^2(\Gamma_T)}\right) \nonumber\\
& = 2 \|\delta \Phi\|_{\mathbb{L}^2_T}^2. \label{2eq2.35}
\end{align}
Next, we estimate the norm $\|\delta \Phi\|_{\mathbb{L}^2_T}^2$. In the adjoint system \eqref{2aeq1to4}, multiplying the first equation by $\delta \varphi$ and the second by $\delta \varphi_\Gamma$, we obtain the following identities
\begin{align*}
& -\frac{1}{2} \partial_t \left( \int_\Omega |\delta \varphi|^2 \,\d x\right) +d\int_\Omega |\nabla(\delta \varphi)|^2 \,\d x -\frac{1}{2} \int_\Gamma d\partial_\nu (|\delta \varphi|^2) \,\d S + \int_\Omega a |\delta \varphi|^2 \,\d x =0, \\
& -\frac{1}{2} \partial_t \left( \int_\Gamma |\delta \varphi_\Gamma|^2 \,\d S\right) +\gamma  \int_\Gamma |\nabla_\Gamma(\delta \varphi_\Gamma)|^2 \,\d S +\frac{1}{2} \int_\Gamma d\partial_\nu (|\delta \varphi|^2) \,\d S + \int_\Gamma b |\delta \varphi_\Gamma|^2 \,\d S =0, 
\end{align*}
where we used integration by parts and the surface divergence formula \eqref{sdt}. Adding up the last two equalities, we obtain
\begin{align*}
&\frac{1}{2} \partial_t \left( \int_\Omega |\delta \varphi|^2 \,\d x + \int_\Gamma |\delta \varphi_\Gamma|^2 \,\d S\right) -\int_\Omega a |\delta \varphi|^2 \,\d x -\int_\Gamma b |\delta \varphi_\Gamma|^2 \,\d S \nonumber\\
&= d\int_\Omega |\nabla(\delta \varphi)|^2 \,\d x + \gamma  \int_\Gamma |\nabla_\Gamma(\delta \varphi_\Gamma)|^2 \,\d S .
\end{align*}
Then
\begin{align*}
&\frac{1}{2} \partial_t \left( \int_\Omega |\delta \varphi|^2 \,\d x + \int_\Gamma |\delta \varphi_\Gamma|^2 \,\d S\right) + \max(\|a\|_\infty,\|b\|_\infty)\left( \int_\Omega |\delta \varphi|^2 \,\d x + \int_\Gamma |\delta \varphi_\Gamma|^2 \,\d S\right)\geq 0.
\end{align*}
This inequality implies that the function $H$ defined by
\begin{equation*}
H(t)=\mathrm{e}^{2\max(\|a\|_\infty,\|b\|_\infty) t} \left( \int_\Omega |\delta \varphi|^2 \,\d x + \int_\Gamma |\delta \varphi_\Gamma|^2 \,\d S\right)
\end{equation*}
is nondecreasing on $[0,T]$. Using $\eqref{2aeq1to4}_4$, it follows, for all $t\in [0,T]$, that
\begin{align*}
&\|\delta \Phi\|^2_{\mathbb{L}^2_T} =\int_{\Omega_T} |\delta \varphi(t,x,\mathcal{F})|^2 \,\d x \, \d t + \int_{\Gamma_T} |\delta \varphi_\Gamma(t,x,\mathcal{F})|^2 \,\d S \,\d t \nonumber\\
& \leq T \mathrm{e}^{2 \max(\|a\|_\infty,\|b\|_\infty) T} \left( \int_\Omega |\delta \varphi(T,x,\mathcal{F})|^2 \,\d x + \int_\Gamma |\delta \varphi_\Gamma(T,x,\mathcal{F})|^2 \,\d S\right)\nonumber\\
& \le T \mathrm{e}^{2 \max(\|a\|_\infty,\|b\|_\infty) T} \left( \int_\Omega |\delta y(T,x,\mathcal{F})|^2 \,\d x + \int_\Gamma |\delta y_\Gamma(T,x,\mathcal{F})|^2 \,\d S \right).
\end{align*}
Using this last inequality and inequality \eqref{02eq2.40}, we obtain
\begin{align*}
2\|\delta \Phi\|^2_{\mathbb{L}^2_T} &\leq 2 T \mathrm{e}^{2 \max(\|a\|_\infty,\|b\|_\infty) T} \mathrm{e}^{(1+2\max(\|a\|_\infty, \|b\|_\infty))T} \|\delta \mathcal{F}\|^2_{\mathbb{L}^2_T}\nonumber\\
\qquad & \leq 2 T \mathrm{e}^{(1+4\max(\|a\|_\infty, \|b\|_\infty))T} \|\delta \mathcal{F}\|^2_{\mathbb{L}^2_T}.
\end{align*}
With the estimate \eqref{2eq2.35} this implies that
\begin{equation*}
\|\mathcal{J}'(\mathcal{F}+ \delta \mathcal{F})-\mathcal{J}'(\mathcal{F})\|^2_{\mathbb{L}^2_T} \leq 2 T \mathrm{e}^{(1+4\max(\|a\|_\infty, \|b\|_\infty))T}\|\delta \mathcal{F}\|^2_{\mathbb{L}^2_T}.
\end{equation*}
This yields the desired result. 
\end{proof}

Next, we consider the Landweber method given by the following iteration
\begin{equation}\label{2eq2.43}
\mathcal{F}_{k+1}= \mathcal{F}_k-\alpha_k \mathcal{J}'(\mathcal{F}_k), \quad k=0,1,2,\dots,
\end{equation}
where $\mathcal{F}_0\in \mathcal{U}$ is a given initial iteration and $\alpha_k$ is a relaxation parameter defined by the minimization problem
$$h_k(\alpha):=\inf_{\alpha\ge 0} h_k(\alpha), \quad h_k(\alpha):=\mathcal{J}\left(\mathcal{F}_k-\alpha \mathcal{J}'(\mathcal{F}_k)\right), \quad k=0,1,2,\dots$$
We refer to \cite{EHN'00} for a detailed exposition of this method.

\begin{remark}
At this level, some remarks should be made:
\begin{itemize}
\item[$\bullet$] The Lipschitz continuity of the gradient $\mathcal{J}'$ implies that the sequence $\left(\mathcal{J}(\mathcal{F}_k)\right)$ is decreasing, where $\left(\mathcal{F}_k\right)$ is defined by \eqref{2eq2.43}. This fact yields fast numerical results; see \cite[Lemma 3.4.4]{HR'17} for more details.
\item[$\bullet$] In some situations, the choice of the iteration parameter $\alpha_k >0$ is difficult. However, the Lipschitz continuity of $\mathcal{J}'$ allows us to estimate this parameter by the Lipschitz constant $L$ via the estimate
\begin{equation*}
0<\lambda_0 \leq \alpha_k \leq \frac{2}{L+2\lambda_1}
\end{equation*}
for arbitrary parameters $\lambda_0,\lambda_1 >0$; see \cite[Section 3.4.3]{Ha'07}.
\item[$\bullet$] If $\alpha_k=\alpha=\mathrm{const}>0$ for all $k$, the optimal value of $\alpha$ (which corresponds to $\lambda_0=\frac{1}{L}$ and $\lambda_1=\frac{L}{2}$) is $\alpha_*=\frac{1}{L}$. Hence, if $L$ is large, the step parameter $\alpha_*$ will be small. This fact illustrates the importance of a sharp Lipschitz constant $L$.
\end{itemize}
\end{remark}

The next lemma follows the same ideas in Corollary 4.1 and Theorem 4.1 in \cite{Ha'07} (one can also refer to \cite{Va'81}). We denote by $\mathcal{U}_*$ the set of all quasi-solutions of \eqref{neq2.5}-\eqref{neq2.6}.
\begin{lemma}
Let $(\mathcal{F}_k) \subset \mathcal{U}$ be the sequence defined by \eqref{2eq2.43}. If the iteration parameter $\alpha_k =\alpha_{*}$ for all $k$. Then the following assertions hold.
\begin{enumerate}
\item[(i)] The sequence $(\mathcal{J}(\mathcal{F}_k))$ is monotone decreasing and convergent. Moreover,
\begin{equation*}
\lim_{k \to \infty} \|\mathcal{J}'(\mathcal{F}_k)\|_{\mathbb{L}^2_T} =0. 
\end{equation*}
Moreover,
\begin{equation*}
\|\mathcal{F}_{k+1} -\mathcal{F}_k\|_{\mathbb{L}^2_T}^2 \le \frac{2}{L} [\mathcal{J}(\mathcal{F}_k)-\mathcal{J}(\mathcal{F}_{k+1})], \quad k=1,2,3, \ldots,
\end{equation*}
where $L$ is the Lipschitz constant in \eqref{lip};
\item[(ii)] for any given initial iteration $\mathcal{F}_0 \in \mathcal{U}$,  the sequence $(\mathcal{F}_k)$ converges weakly in $\mathbb{L}^2$ to a quasi-solution $\mathcal{F}_* \in \mathcal{U}_*$ of (ISP). The rate of convergence of $(\mathcal{J}(\mathcal{F}_k))$ can be estimated as follows
$$
0 \leq \mathcal{J}(\mathcal{F}_k)-\mathcal{J}_* \leq 2 L \frac{\beta^2}{k}, \quad k=1,2,3, \ldots,
$$
where $\mathcal{J}_* :=\lim\limits_{k \to \infty} \mathcal{J}\left(\mathcal{F}_k\right)$ and $\beta:=\sup \{\left\|\mathcal{F}_k-\mathcal{F}_*\right\|_{\mathbb{L}^2_T} \colon \mathcal{F}_k\in \mathcal{U}, \mathcal{F}_* \in \mathcal{U}_*\}.$
\end{enumerate}
\end{lemma}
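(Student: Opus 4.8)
The plan is to read \eqref{2eq2.43} as a gradient-descent scheme and to exploit two structural features of $\mathcal{J}$. The first is the Lipschitz continuity of $\mathcal{J}'$ with constant $L$ from Lemma \ref{2lem2.8}. The second is the convexity of $\mathcal{J}$: since $Y_0$ is fixed, the mild-solution formula makes $\mathcal{F}\mapsto Y(T,\cdot,\mathcal{F})$ an affine map of $\mathcal{F}$, so $\mathcal{J}(\mathcal{F})=\frac12\|Y(T,\cdot,\mathcal{F})-Y_T^\delta\|_{\mathbb{L}^2}^2$ is the square of an affine function and hence convex. Throughout I would work in the $\mathbb{L}^2_T$ inner product, in which the gradient $\mathcal{J}'(\mathcal{F})=\Phi$ of Proposition \ref{2prop2.6} lives.

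For part (i) I would first establish the standard descent inequality. Integrating $\mathcal{J}'$ along the segment $[\mathcal{F}_k,\mathcal{F}_{k+1}]$ and invoking Lemma \ref{2lem2.8} gives
$$\mathcal{J}(\mathcal{F}_{k+1}) \le \mathcal{J}(\mathcal{F}_k) + \langle \mathcal{J}'(\mathcal{F}_k),\mathcal{F}_{k+1}-\mathcal{F}_k\rangle_{\mathbb{L}^2_T} + \frac{L}{2}\|\mathcal{F}_{k+1}-\mathcal{F}_k\|_{\mathbb{L}^2_T}^2 .$$
Substituting $\mathcal{F}_{k+1}-\mathcal{F}_k=-\alpha_*\mathcal{J}'(\mathcal{F}_k)$ with $\alpha_*=1/L$ collapses this to $\mathcal{J}(\mathcal{F}_{k+1})\le \mathcal{J}(\mathcal{F}_k)-\frac{1}{2L}\|\mathcal{J}'(\mathcal{F}_k)\|_{\mathbb{L}^2_T}^2$. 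This one inequality delivers all of (i): the sequence $(\mathcal{J}(\mathcal{F}_k))$ is decreasing, and being bounded below by $0$ it converges; telescoping yields $\sum_k\|\mathcal{J}'(\mathcal{F}_k)\|_{\mathbb{L}^2_T}^2<\infty$, so $\|\mathcal{J}'(\mathcal{F}_k)\|_{\mathbb{L}^2_T}\to 0$; and since $\|\mathcal{F}_{k+1}-\mathcal{F}_k\|_{\mathbb{L}^2_T}^2=\frac{1}{L^2}\|\mathcal{J}'(\mathcal{F}_k)\|_{\mathbb{L}^2_T}^2$, the descent inequality rearranges directly to the stated bound $\|\mathcal{F}_{k+1}-\mathcal{F}_k\|_{\mathbb{L}^2_T}^2\le\frac{2}{L}[\mathcal{J}(\mathcal{F}_k)-\mathcal{J}(\mathcal{F}_{k+1})]$.

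For part (ii), fix a quasi-solution $\mathcal{F}_*\in\mathcal{U}_*$, so that $\mathcal{J}(\mathcal{F}_*)=\inf_{\mathcal{U}}\mathcal{J}$. Expanding $\|\mathcal{F}_{k+1}-\mathcal{F}_*\|^2$, bounding the cross term from below by convexity via $\langle\mathcal{J}'(\mathcal{F}_k),\mathcal{F}_k-\mathcal{F}_*\rangle_{\mathbb{L}^2_T}\ge\mathcal{J}(\mathcal{F}_k)-\mathcal{J}(\mathcal{F}_*)$, and inserting the descent inequality for the quadratic term, I would arrive at the Fejér-type recursion
$$\frac{2}{L}\bigl[\mathcal{J}(\mathcal{F}_{k+1})-\mathcal{J}(\mathcal{F}_*)\bigr]\le \|\mathcal{F}_k-\mathcal{F}_*\|_{\mathbb{L}^2_T}^2-\|\mathcal{F}_{k+1}-\mathcal{F}_*\|_{\mathbb{L}^2_T}^2 .$$
In particular $\|\mathcal{F}_k-\mathcal{F}_*\|_{\mathbb{L}^2_T}$ is nonincreasing (Fejér monotonicity). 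Telescoping the recursion, together with the monotonicity of $(\mathcal{J}(\mathcal{F}_k))$ from (i), bounds $\mathcal{J}(\mathcal{F}_k)-\mathcal{J}(\mathcal{F}_*)$ by a multiple of $\|\mathcal{F}_0-\mathcal{F}_*\|^2/k\le\beta^2/k$, which is the advertised $O(1/k)$ rate and identifies $\mathcal{J}_*=\mathcal{J}(\mathcal{F}_*)=\inf_{\mathcal{U}}\mathcal{J}$. To locate the limit, I would use that $\mathcal{U}$ is bounded, closed and convex, hence weakly sequentially compact, to extract $\mathcal{F}_{k_j}\rightharpoonup\widetilde{\mathcal{F}}\in\mathcal{U}$; then passing to the limit in $\mathcal{J}(\mathcal{G})\ge\mathcal{J}(\mathcal{F}_{k_j})+\langle\mathcal{J}'(\mathcal{F}_{k_j}),\mathcal{G}-\mathcal{F}_{k_j}\rangle_{\mathbb{L}^2_T}$, using $\|\mathcal{J}'(\mathcal{F}_{k_j})\|_{\mathbb{L}^2_T}\to0$ and the weak lower semicontinuity of the convex continuous functional $\mathcal{J}$, shows $\widetilde{\mathcal{F}}\in\mathcal{U}_*$.

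The step I expect to be the main obstacle is upgrading this subsequential weak convergence to weak convergence of the \emph{entire} sequence $(\mathcal{F}_k)$. Here the Fejér monotonicity is decisive: applying the recursion to each $\mathcal{F}_*\in\mathcal{U}_*$ shows $\|\mathcal{F}_k-\mathcal{F}_*\|_{\mathbb{L}^2_T}$ converges for every quasi-solution, and an Opial-type argument then forces any two weak cluster points (both of which lie in $\mathcal{U}_*$) to coincide, so $(\mathcal{F}_k)$ converges weakly to a single quasi-solution. A secondary technical point is the rigorous justification of the integral representation behind the descent inequality in the $H^1(0,T;\mathbb{L}^2)$ setting, which rests on the $C^1$-regularity of $\mathcal{J}$ furnished by Lemma \ref{2lem2.8}.
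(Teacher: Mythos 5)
Your proof is correct, but note that the paper does not actually prove this lemma: it states it with only a pointer to Corollary 4.1 and Theorem 4.1 of \cite{Ha'07} (and to \cite{Va'81}), whose proofs run along exactly the lines you reconstructed --- the descent inequality obtained from the Lipschitz gradient of Lemma \ref{2lem2.8} with $\alpha_*=1/L$, telescoping for part (i), and a Fej\'er-type recursion plus convexity for part (ii). So in substance you have supplied the argument that the paper delegates to the literature, and your constants even come out sharper: your telescoped bound $\mathcal{J}(\mathcal{F}_k)-\mathcal{J}_*\le L\beta^2/(2k)$ improves on the stated $2L\beta^2/k$. Two comparative remarks. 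First, you get convexity of $\mathcal{J}$ directly from the affinity of $\mathcal{F}\mapsto Y(T,\cdot,\mathcal{F})$ (linearity of \eqref{eq1to4} with $Y_0$ fixed), whereas the paper establishes it only afterwards, in Section \ref{sec4}, via the monotonicity identity \eqref{Moneq} of Lemma \ref{monlem}; your shortcut is legitimate and avoids what would otherwise be a forward reference. Second, your Opial/Fej\'er argument upgrading subsequential weak convergence to weak convergence of the \emph{entire} sequence $(\mathcal{F}_k)$ is precisely the detail the citation glosses over, and it is the right tool: $\|\mathcal{F}_k-\mathcal{F}_*\|_{\mathbb{L}^2_T}$ is nonincreasing for every $\mathcal{F}_*\in\mathcal{U}_*$, every weak cluster point lies in $\mathcal{U}_*$ by your limit passage using $\|\mathcal{J}'(\mathcal{F}_{k_j})\|_{\mathbb{L}^2_T}\to 0$ and weak lower semicontinuity, and Opial's property of the Hilbert space $\mathbb{L}^2_T$ forces the cluster point to be unique. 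One caveat you share with the paper rather than resolve: the update \eqref{2eq2.43} moves by $\mathcal{J}'(\mathcal{F}_k)\in\mathbb{L}^2_T$, so there is no a priori guarantee that the iterates remain in $\mathcal{U}\subset H^1(0,T;\mathbb{L}^2)$ or respect the bound $R$; both your argument and the lemma's statement simply posit $(\mathcal{F}_k)\subset\mathcal{U}$, and your weak compactness step should accordingly be read as extracting subsequences weakly convergent in $H^1(0,T;\mathbb{L}^2)$ (where the closed convex $\mathcal{U}$ is weakly closed), which then also converge weakly in $\mathbb{L}^2_T$ as the statement asserts.
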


\section{Existence and Uniqueness of the solution to (ISP)}\label{sec4}
In the following, we use some tools from the calculus of variations to study the existence and uniqueness of the solution to (ISP). First, we prove the following lemma.
\begin{lemma}\label{monlem}
For the cost functional $\mathcal{J}\in C^1(\mathcal{U})$, the following formula holds
\begin{equation}\label{Moneq}
\langle \mathcal{J}'(\mathcal{F} + \delta \mathcal{F})- \mathcal{J}'(\mathcal{F}),\delta \mathcal{F}\rangle_{\mathbb{L}^2_T}=2 \|\delta Y(T,\cdot, \mathcal{F})\|^2_{\mathbb{L}^2}, \quad \forall \mathcal{F}, \delta \mathcal{F} \in \mathcal{U},
\end{equation}
where $\delta Y(T,\cdot, \mathcal{F})$ is the solution of \eqref{deq1to4}.
\end{lemma}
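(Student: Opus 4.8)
The plan is to reduce the identity \eqref{Moneq} to a duality (integration by parts in time) between the forward increment system \eqref{deq1to4} and the adjoint increment system \eqref{2aeq1to4}, reusing the cancellation already performed in \eqref{2eq2.25}--\eqref{2eq2.27}. The first step is to recognize the gradient difference as an adjoint state: by Proposition \ref{2prop2.6}, $\mathcal{J}'(\mathcal{F}+\delta\mathcal{F})$ and $\mathcal{J}'(\mathcal{F})$ are the mild solutions of \eqref{aeq1to5} with respective terminal data $Y(T,\cdot,\mathcal{F}+\delta\mathcal{F})-Y_T^\delta$ and $Y(T,\cdot,\mathcal{F})-Y_T^\delta$. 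Since \eqref{aeq1to5} is linear and depends affinely on its terminal datum, the difference $\mathcal{J}'(\mathcal{F}+\delta\mathcal{F})-\mathcal{J}'(\mathcal{F})$ is exactly the solution $\delta\Phi=(\delta\varphi,\delta\varphi_\Gamma)$ of the homogeneous system \eqref{2aeq1to4} with terminal datum $\delta\Phi\rvert_{t=T}=\delta Y(T,\cdot,\mathcal{F})$. Hence the left-hand side of \eqref{Moneq} equals $\langle\delta\mathcal{F},\delta\Phi\rangle_{\mathbb{L}^2_T}$, and it remains to evaluate this pairing.

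To do so I would differentiate $t\mapsto\langle\delta Y(t),\delta\Phi(t)\rangle_{\mathbb{L}^2}$. Replacing $\partial_t(\delta y)$ and $\partial_t(\delta\varphi)$ by the right-hand sides of \eqref{deq1to4} and \eqref{2aeq1to4} and applying Green's formula transfers the interior Laplacians onto $\Gamma$; doing likewise on the boundary, the surface divergence formula \eqref{sdt} cancels the two Laplace--Beltrami contributions, while the zeroth-order terms in $a$ and $b$ cancel pointwise. The point requiring the most care, and the one I expect to be the main obstacle, is the normal-derivative coupling: the boundary integrals of $\pm d\,\partial_\nu$ produced by the interior Green identity must cancel exactly against those produced by the dynamic boundary equation of \eqref{2aeq1to4}, precisely as in \eqref{2eq2.25}--\eqref{2eq2.27}. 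After these cancellations only the source pairing survives, giving $\frac{\d}{\d t}\langle\delta Y,\delta\Phi\rangle_{\mathbb{L}^2}=\langle\delta\mathcal{F},\delta\Phi\rangle_{\mathbb{L}^2}$.

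Finally, integrating over $[0,T]$ and using the homogeneous initial datum $\delta Y(0)=0$ from \eqref{deq1to4} together with $\delta\Phi(T)=\delta Y(T)$ from \eqref{2aeq1to4} yields $\langle\delta\mathcal{F},\delta\Phi\rangle_{\mathbb{L}^2_T}=\|\delta Y(T,\cdot,\mathcal{F})\|^2_{\mathbb{L}^2}$, which is the asserted formula \eqref{Moneq} (the exact numerical prefactor should be reconciled with the normalization $\tfrac12$ in the definition \eqref{neq2.6} of $\mathcal{J}$). The integrations by parts are legitimate since, for $\delta\mathcal{F}\in\mathcal{U}\subset H^1(0,T;\mathbb{L}^2)$ with zero initial data, Proposition \ref{prop2}(ii) yields the $\mathbb{E}_2(\tau,T)$-regularity of $\delta Y$ and $\delta\Phi$ on each $(\tau,T)$, and the identity extends to the general case by the density argument used in Lemma \ref{lemCont}.
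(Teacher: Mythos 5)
Your proposal is correct and follows essentially the same duality argument as the paper's proof: both identify $\mathcal{J}'(\mathcal{F}+\delta\mathcal{F})-\mathcal{J}'(\mathcal{F})$ with the solution $\delta\Phi$ of \eqref{2aeq1to4} with terminal datum $\delta Y(T,\cdot,\mathcal{F})$, and then evaluate $\langle \delta\mathcal{F},\delta\Phi\rangle_{\mathbb{L}^2_T}$ by integration by parts in time combined with Green's formula and the surface divergence formula \eqref{sdt}, the interior and boundary $d\,\partial_\nu$ terms cancelling exactly as you anticipate. As for the prefactor you flag: your coefficient $1$, i.e.\ $\langle \mathcal{J}'(\mathcal{F}+\delta\mathcal{F})-\mathcal{J}'(\mathcal{F}),\delta\mathcal{F}\rangle_{\mathbb{L}^2_T}=\|\delta Y(T,\cdot,\mathcal{F})\|^2_{\mathbb{L}^2}$, is exactly what the paper's own proof derives in its final line, so the factor $2$ in the statement \eqref{Moneq} is an inconsistency in the paper (inherited from the normalization of the cost functional without the $\tfrac{1}{2}$ in earlier literature), and your hedge is resolved in your favor; the monotonicity of $\mathcal{J}'$ and the ensuing convexity of $\mathcal{J}$ are unaffected either way.
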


\begin{proof}
Let $(\delta \varphi, \delta \varphi_\Gamma)$ be the solution of \eqref{2aeq1to4}. By the gradient formula \eqref{2eq2.11}, we have
\begin{align*}
&\langle \mathcal{J}'(\mathcal{F} + \delta \mathcal{F})- \mathcal{J}'(\mathcal{F}),\delta \mathcal{F}\rangle_{\mathbb{L}^2_T} = \int_{\Omega_T} \delta F \delta \varphi \,\d x\, \d t + \int_{\Gamma_T} \delta G \delta \varphi_\Gamma \,\d S\, \d t \notag\\
& = \int_{\Omega_T} \partial_t(\delta y) \delta \varphi \,\d x\, \d t + \int_{\Gamma_T} \partial_t(\delta y_\Gamma) \delta \varphi_\Gamma \,\d S\, \d t + \int_{\Omega_T} [-d \Delta(\delta y) + a(x)(\delta y)]\delta \varphi \,\d x\, \d t\notag\\
& \qquad + \int_{\Gamma_T} [-\gamma \Delta(\delta y_\Gamma) + d \partial_\nu (\delta y) + b(x)(\delta y_\Gamma)]\delta \varphi_\Gamma \,\d S\, \d t \notag\\
& = \|\delta y(T,\cdot)\|^2_{L^2(\Omega)} + \|\delta y_\Gamma(T,\cdot)\|^2_{L^2(\Gamma)} - \int_{\Omega_T} \partial_t(\delta \varphi) \delta y \,\d x\, \d t - \int_{\Gamma_T} \partial_t(\delta \varphi_\Gamma) \delta y_\Gamma \,\d S\, \d t \notag\\
& + \int_{\Omega_T} [-d \Delta(\delta \varphi) + a(x)(\delta \varphi)]\delta y \,\d x\, \d t + \int_{\Gamma_T} [-\gamma \Delta(\delta \varphi_\Gamma) + d \partial_\nu (\delta \varphi) + b(x)(\delta \varphi_\Gamma)]\delta y_\Gamma \,\d S\, \d t \notag\\
& = \|\delta y(T,\cdot)\|^2_{L^2(\Omega)} + \|\delta y_\Gamma(T,\cdot)\|^2_{L^2(\Gamma)},
\end{align*}
where we used integration by parts with respect to $t$, the Green formula in $\Omega$, and the surface divergence formula on $\Gamma$, together with the system \eqref{deq1to4}.
\end{proof}

The monotonicity of $\mathcal{J}' \colon \mathcal{U} \rightarrow \mathbb{L}^2_T$ in Lemma \ref{monlem} implies the convexity of the functional $\mathcal{J}$. As a direct consequence of Lemma \ref{lemCont} and \cite[Theorem 25.C]{Ze'90}, we have the following existence result.
\begin{corollary}
The cost functional $\mathcal{J}$ is continuous and convex on $\mathcal{U}$. There exists then a minimizer $\hat{\mathcal{F}} \in \mathcal{U}$ such that
$$\mathcal{J}\left(\hat{\mathcal{F}}\right)=\min_{\mathcal{F}\in \mathcal{U}} \mathcal{J}(\mathcal{F}).$$
\end{corollary}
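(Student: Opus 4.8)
The plan is to verify the two structural properties singled out in the statement—continuity and convexity of $\mathcal{J}$ on $\mathcal{U}$—and then to invoke the abstract existence theorem for the minimization of convex functionals on reflexive spaces, namely Theorem 25.C of \cite{Ze'90}. The set $\mathcal{U}$ is nonempty (it contains $\mathcal{F}=0$), bounded, closed and convex, and it lies in the Hilbert space $H^1(0,T;\mathbb{L}^2)$, so the ambient functional-analytic setting for the direct method is already in place.

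First I would establish continuity. By Lemma \ref{lemCont}, the solution map $\mathcal{F}\mapsto Y$ is continuous from $H^1(0,T;\mathbb{L}^2)$ into $C([0,T];\mathbb{L}^2)$, so in particular the terminal-value evaluation $\mathcal{F}\mapsto Y(T,\cdot,\mathcal{F})$ is continuous from $H^1(0,T;\mathbb{L}^2)$ into $\mathbb{L}^2$. Writing $\mathcal{J}(\mathcal{F})=\tfrac{1}{2}\|Y(T,\cdot,\mathcal{F})-Y_T^\delta\|_{\mathbb{L}^2}^2$ as the composition of this continuous map with the squared-norm functional on $\mathbb{L}^2$ (which is locally Lipschitz), the continuity of $\mathcal{J}$ on the bounded set $\mathcal{U}$ follows at once.

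Next I would deduce convexity from the monotonicity formula of Lemma \ref{monlem}. Setting $u=\mathcal{F}+\delta\mathcal{F}$ and $v=\mathcal{F}$, that lemma reads
\[
\langle \mathcal{J}'(u)-\mathcal{J}'(v),\,u-v\rangle_{\mathbb{L}^2_T}=2\,\|\delta Y(T,\cdot,\mathcal{F})\|_{\mathbb{L}^2}^2\ge 0,
\]
so the Fréchet gradient $\mathcal{J}'$ is a monotone operator on the convex set $\mathcal{U}$. Since $\mathcal{J}\in C^1(\mathcal{U})$ by Lemma \ref{2lem2.8}, the classical characterization of convexity for differentiable functionals—$\mathcal{J}$ is convex on a convex set precisely when its derivative is monotone there—yields the convexity of $\mathcal{J}$. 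With continuity and convexity in hand, I would then apply the direct method: $H^1(0,T;\mathbb{L}^2)$ is reflexive and $\mathcal{U}$ is bounded, closed and convex, hence weakly sequentially compact; a convex, strongly continuous functional is weakly lower semicontinuous, so Theorem 25.C of \cite{Ze'90} applies and furnishes a minimizer $\hat{\mathcal{F}}\in\mathcal{U}$.

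The step I expect to require the most care is the passage from the \emph{strong} continuity of $\mathcal{J}$—the form in which Lemma \ref{lemCont} delivers it—to the \emph{weak} lower semicontinuity demanded by the direct method, since minimizing sequences in a bounded set can only be extracted to converge weakly, not strongly. The bridge is exactly the interplay of convexity and strong continuity secured in the first two steps: strong continuity gives strong lower semicontinuity, and convexity promotes this to weak lower semicontinuity. This is why both properties, and not continuity alone, are isolated in the statement before the existence conclusion is drawn.
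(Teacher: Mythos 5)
Your proposal is correct and takes essentially the same route as the paper: continuity of $\mathcal{J}$ via Lemma \ref{lemCont}, convexity via the monotonicity formula \eqref{Moneq} of Lemma \ref{monlem}, and existence by invoking \cite[Theorem 25.C]{Ze'90}. Your closing discussion of how convexity upgrades strong continuity to weak lower semicontinuity merely spells out the mechanism inside that cited theorem, which the paper applies without elaboration.
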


Since the strict convexity of $\mathcal{J}$ is characterized by the strict monotonicity of $\mathcal{J}'$, the equality \eqref{Moneq} yields a sufficient condition for uniqueness.
\begin{lemma}
If the positivity condition
\begin{equation*}
\int_{\Omega} (\delta y(T, x; \mathcal{F}))^2 \,\d x + \int_{\Gamma} (\delta y_\Gamma(T, x; \mathcal{F}))^2 \,\d S >0, \qquad \forall \mathcal{F} \in \mathcal{U},
\end{equation*}
holds, then the problem (ISP) admits at most one solution.
\end{lemma}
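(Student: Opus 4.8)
The plan is to derive uniqueness from the \emph{strict convexity} of $\mathcal{J}$ on the convex admissible set $\mathcal{U}$, using the elementary fact that a strictly convex functional attains its minimum at no more than one point of a convex set. The decisive computation has already been carried out in Lemma \ref{monlem}; what remains is to convert the positivity hypothesis into strict monotonicity of $\mathcal{J}'$ and then feed this into the convexity characterization recalled just before the statement.

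First I would invoke the characterization of convexity for a Fr\'echet differentiable functional on a convex set: $\mathcal{J}$ is strictly convex on $\mathcal{U}$ if and only if its gradient is strictly monotone, that is,
$$\langle \mathcal{J}'(\mathcal{G}) - \mathcal{J}'(\mathcal{F}), \mathcal{G}-\mathcal{F}\rangle_{\mathbb{L}^2_T} > 0 \qquad \text{whenever } \mathcal{F}, \mathcal{G}\in \mathcal{U},\ \mathcal{F}\neq \mathcal{G}.$$
Setting $\mathcal{G}=\mathcal{F}+\delta\mathcal{F}$ with $\delta\mathcal{F}=\mathcal{G}-\mathcal{F}\neq 0$, the identity \eqref{Moneq} of Lemma \ref{monlem} gives
$$\langle \mathcal{J}'(\mathcal{F}+\delta\mathcal{F}) - \mathcal{J}'(\mathcal{F}), \delta\mathcal{F}\rangle_{\mathbb{L}^2_T} = 2\,\|\delta Y(T, \cdot, \mathcal{F})\|^2_{\mathbb{L}^2},$$
whose right-hand side is exactly twice the quantity appearing in the positivity assumption. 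Thus, under that hypothesis, the pairing is strictly positive for every nonzero admissible direction $\delta\mathcal{F}$, so $\mathcal{J}'$ is strictly monotone and hence $\mathcal{J}$ is strictly convex on $\mathcal{U}$.

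Finally I would close with the standard argument: if $\mathcal{F}_1\neq \mathcal{F}_2$ in $\mathcal{U}$ were two minimizers, convexity of $\mathcal{U}$ places the midpoint $\tfrac12(\mathcal{F}_1+\mathcal{F}_2)$ in $\mathcal{U}$, and strict convexity forces $\mathcal{J}\big(\tfrac12(\mathcal{F}_1+\mathcal{F}_2)\big) < \tfrac12\big(\mathcal{J}(\mathcal{F}_1)+\mathcal{J}(\mathcal{F}_2)\big)=\min_{\mathcal{U}}\mathcal{J}$, a contradiction; hence the quasi-solution is unique. I do not expect a genuine obstacle, since the crucial equality \eqref{Moneq} is already available; the only point needing care is the bookkeeping in the hypothesis. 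Because $\delta Y$ solves the linear system \eqref{deq1to4} with zero initial data and source $\delta\mathcal{F}$, the norm $\|\delta Y(T,\cdot,\mathcal{F})\|_{\mathbb{L}^2}$ in fact depends only on the variation $\delta\mathcal{F}$, so the positivity condition must be read as holding for every nonzero admissible direction $\delta\mathcal{F}$; this is precisely the injectivity of the map $\delta\mathcal{F}\mapsto \delta Y(T,\cdot)$ that rules out two distinct minimizers.
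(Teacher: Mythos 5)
Your proposal is correct and follows essentially the same route as the paper: the identity \eqref{Moneq} from Lemma \ref{monlem} converts the positivity hypothesis into strict monotonicity of $\mathcal{J}'$, hence strict convexity of $\mathcal{J}$, and uniqueness then follows from the standard uniqueness theorem for strictly convex functionals on convex sets (the paper simply cites \cite[Corollary 25.15]{Ze'90} where you inline the midpoint argument). Your closing observation that the positivity condition must be read as holding for every nonzero admissible direction $\delta\mathcal{F}$ --- since $\delta Y(T,\cdot)$ depends on $\delta\mathcal{F}$ rather than on $\mathcal{F}$ alone --- is a sound reading of the quantifier that the paper's statement leaves implicit.
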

The above lemma is a consequence of the uniqueness theorem for strictly convex functionals defined on convex sets (see, e.g.,\cite[Corollary 25.15]{Ze'90}).

\section{Numerical simulation for 1-D internal heat source}\label{sec5}
In this section, we analyze the numerical reconstruction for an unknown source term that depends only on the space variable. More precisely, we consider the reconstruction of $f(x)$ in the following 1-D heat equation with dynamic boundary conditions
\begin{empheq}[left = \empheqlbrace]{alignat=2}
\begin{aligned}
&y_{t}(t, x)-y_{x x}(t, x)=f(x)r(t,x), &&\qquad(t,x)\in (0,T)\times (0,\ell)  , \\
&y_t(t, 0) - y_{x}(t, 0)=0, &&\qquad t\in (0,T), \\
&y_t(t, \ell) + y_{x}(t, \ell)=0, &&\qquad t\in (0,T), \\
&y(0,x)=y_0(x), \quad (y(0,0), y(0, \ell))=(a,b), &&\qquad x\in (0,\ell),
\label{1deq1to4}
\end{aligned}
\end{empheq}
where $T>0$ is a final time, $\ell>0$ and $\left(y_0,a,b\right)\in L^2(0,\ell) \times\mathbb{R}^2$ is an initial condition. The function $r \in C^1\left([0,T]; C\left([0,\ell]\right)\right)$ is assumed to be known.

The 1-D heat equation with dynamic boundary conditions has attracted special attention as a model of heat conduction problems for a metal bar of length $\ell$. We mention \cite{KN'04}, where the authors have studied the approximate controllability problem from the boundary.

We shall apply the Landweber iteration scheme discussed in a previous section to system \eqref{1deq1to4}. Let $Y(t,x,f):=\left(y(t,x),y(t,0),y(t,\ell)\right)$ be the solution of \eqref{1deq1to4}. The input-output operator $\Psi \colon L^2(0,\ell) \longrightarrow L^2(0,\ell)\times \mathbb{R}^2$ is given by
$$(\Psi f)(x):=Y(T,x,f)=\left(y(T,x),y(T,0),y(T,\ell)\right), \qquad x\in (0,\ell),$$
and the Tikhonov functional by
\begin{align*}
J_\varepsilon(f)&=\frac{1}{2} \left\|Y(T, \cdot,f)-Y_{T}^\delta \right\|_{L^2(0,\ell)\times \mathbb{R}^2}^2 +\frac{\varepsilon}{2} \|f\|_{L^2(0,\ell)}^2, \qquad f\in L^2(0,\ell), \\
& \hspace{-0.3cm}= \frac{1}{2} \left(\left\|y(T, \cdot)-y_{T}^\delta\right\|_{L^2(0,\ell)}^2 + \left|y(T,0)-y_T^{0,\delta}\right|^2 + \left|y(T,\ell)-y_T^{\ell,\delta}\right|^2 + \varepsilon \|f\|_{L^2(0,\ell)}^2\right), \notag
\end{align*}
where $Y_{T}^\delta:=\left(y_{T}^\delta, y_T^{0,\delta}, y_T^{\ell,\delta}\right) \in L^2(0,\ell)\times \mathbb{R}^2$.
The adjoint system corresponding to \eqref{aeq1to5} is given by
\begin{empheq}[left = \empheqlbrace]{alignat=2}
\begin{aligned}
& \varphi_{t}(t, x)+\varphi_{x x}(t, x)=0, &&\hspace{-1cm} (t,x)\in (0,T)\times (0,\ell)  , \\
& \varphi_t(t, 0) + \varphi_{x}(t, 0)=0, && t\in (0,T), \\
& \varphi_t(t, \ell) - \varphi_{x}(t, \ell)=0, && t\in (0,T), \\
& \varphi(T, x)=y(T,x)-y_T^\delta(x), && x\in (0,\ell),\\
& (\varphi(T,0), \varphi(T, \ell))=\left(y(T,0)-y_T^{0,\delta},y(T,\ell)-y_T^{\ell,\delta}\right).
\label{1daeq1to4}
\end{aligned}
\end{empheq}
Similarly to calculations in Section \ref{sec3}, the gradient of $J_\varepsilon$ is given by
\begin{equation}\label{1dj'}
J_\varepsilon'(f)(x)=\int_0^T \varphi(t,x,f) r(t,x) \,\d t + \varepsilon f(x), \quad f\in L^2(0,\ell),\;  x\in (0,\ell).
\end{equation}

The gradient formula \eqref{1dj'} for the Tikhonov functional $J_\varepsilon$ allows us to implement classical versions of the Conjugate Gradient Algorithm with different conjugation coefficients. Here, we shall restrict ourselves to the following basic algorithm.
\smallskip

\begin{algorithm}[H]\label{alg1}
\SetAlgoLined
 Set $k=0$ and choose an initial source $f_0$\;
 Solve the direct problem \eqref{1deq1to4} to obtain $Y(t,x,f_k)$\;
 Knowing the computed $Y(T,x,f_k)$ and the measured $Y_T^\delta$, solve the adjoint problem \eqref{1daeq1to4} to obtain $\varphi(t,x,f_k)$\;
 Compute the descent direction $p_k=J_\varepsilon'(f_k)$ using \eqref{1dj'}\;
 Solve the direct problem \eqref{1deq1to4} with source $p_k$ to get the solution $\Psi p_k$\;
 Compute the relaxation parameter $\displaystyle \alpha_k =\frac{\|p_k\|_{L^2(0,\ell)}^2}{\|\Psi p_k\|_{L^2(0,\ell)\times \mathbb{R}^2}^2}$ (see \cite[Lemma 3.4.1]{HR'17})\;
 Find the next iteration $f_{k+1}=f_k- \alpha_k p_k$\;
 Stop the iteration process if the stopping criterion $J_\varepsilon(f_{k+1}) <e_J$ holds. Otherwise, set $k:=k+1$ and go to Step 2\;
 \caption{Landweber iteration scheme}
\end{algorithm}
\smallskip

Since we deal with dynamic boundary conditions that contain time derivative, the numerical solutions for the direct problem \eqref{1deq1to4} and the adjoint problem \eqref{1daeq1to4} will be obtained by using the method of lines. This method is implemented in our case with help of the \texttt{Mathematica} system \cite{Wo'05} via the function NDSolve for solving ordinary differential systems. The noisy terminal data will be generated from the exact output data as follows
$$Y_T^\delta(x) =Y_T(x) + p \times\|Y_T\|_{L^2(0,\ell)\times \mathbb{R}^2} \times \mathrm{RandomReal[]},$$
where $p$ stands for the percentage of the noise level, and the function RandomReal[] produces random real numbers.

In the subsequent numerical tests, we will choose, for simplicity, the following values for the known parameters
$$T=1, \quad \ell=1, \quad r=1, \quad y_0=0, \quad a=b=0.$$

\noindent\textbf{Example 1}\\
We consider the following basic source term $f(x)=x(1-x), \; x\in (0,1)$. First, we compute the numerical solution $Y(t,\cdot,f)$ to generate the data $Y(1,\cdot,f)$. Next, we plot the corresponding solution.

\begin{figure}[H]
\centering
\includegraphics[scale=0.5]{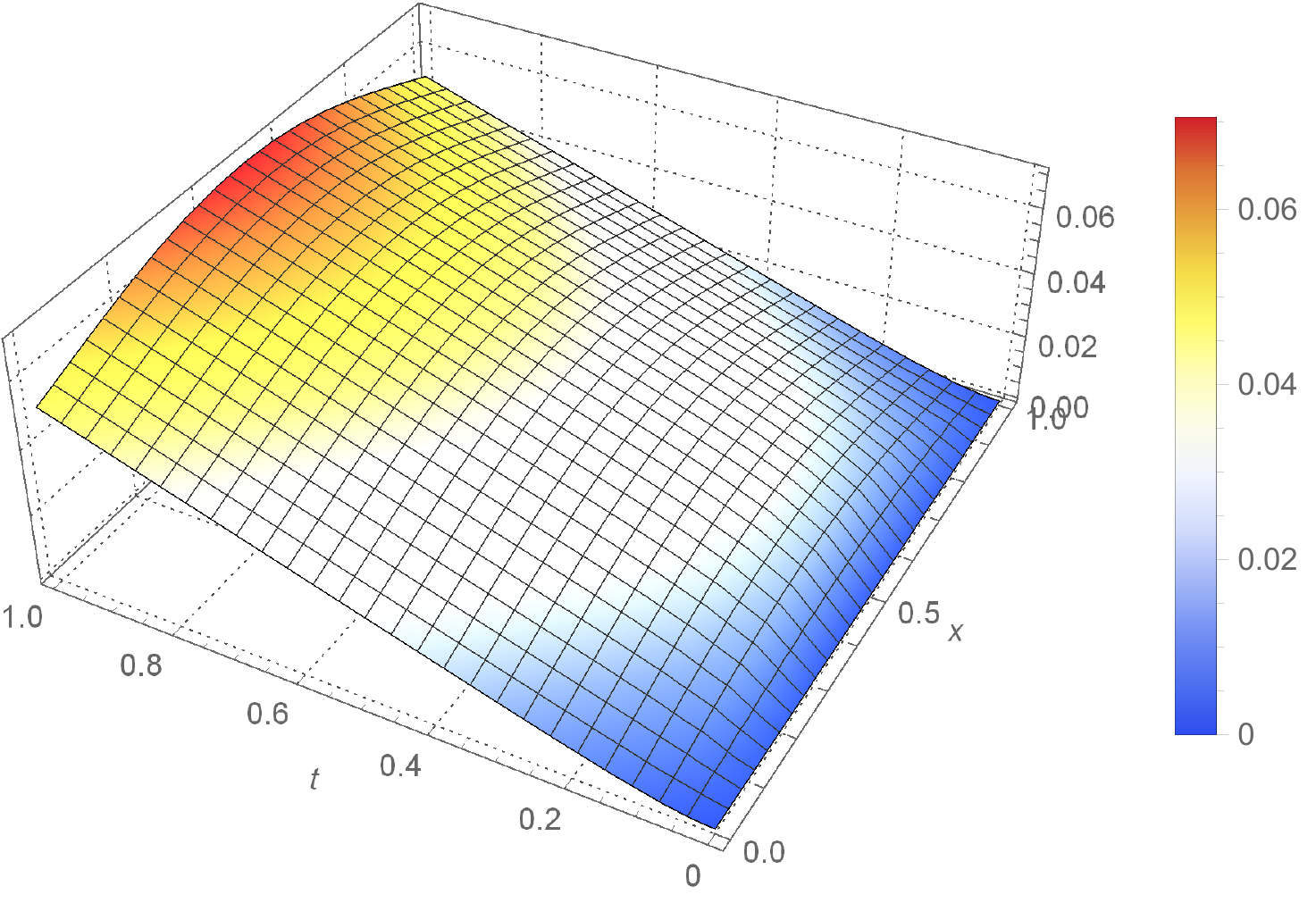}
\caption{Solution $Y(t,x,f)$ of \eqref{1deq1to4}.}
\end{figure}

\begin{figure}[H]
\centering
\includegraphics[scale=0.5]{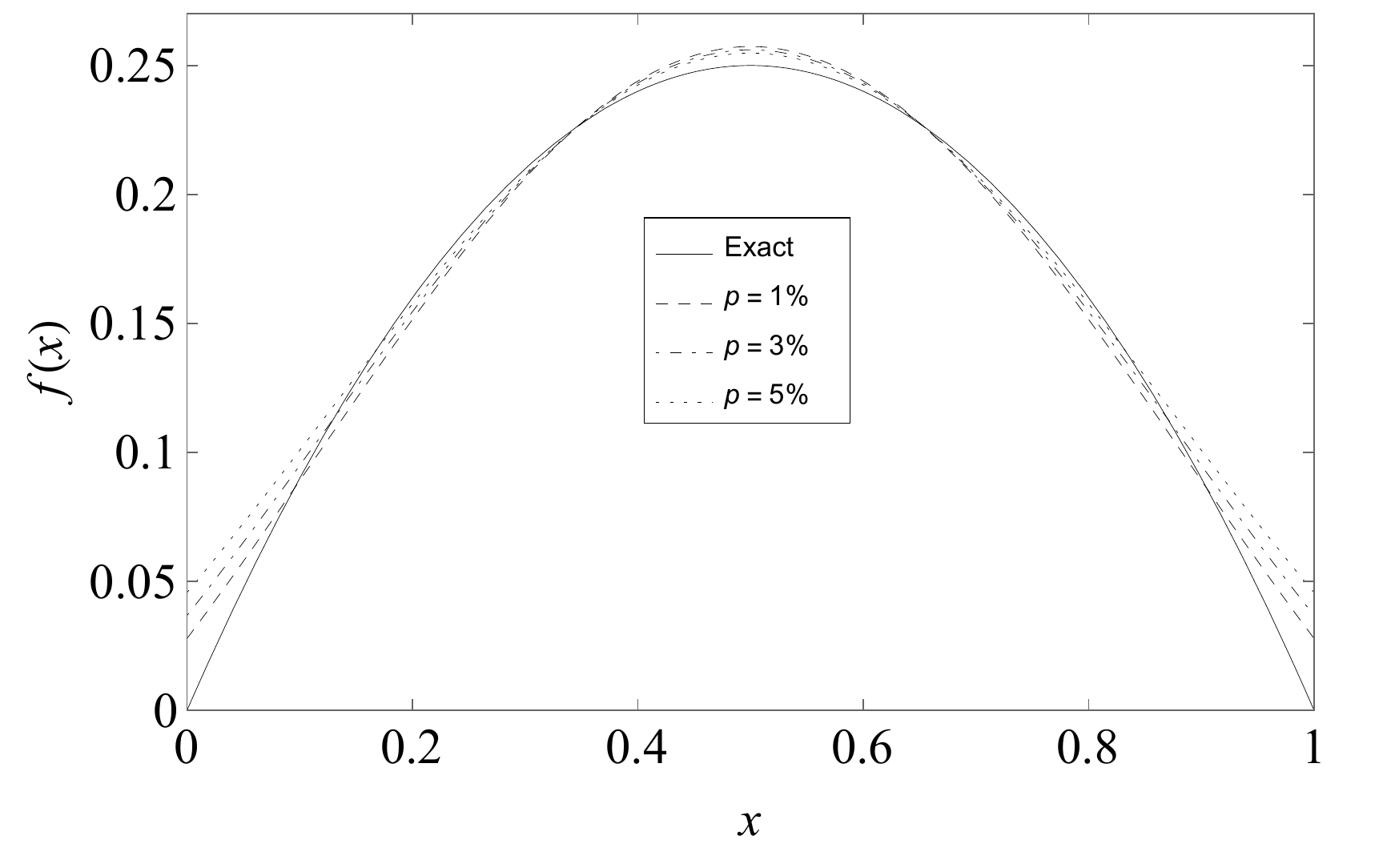}
\caption{Exact and recovered $f(x)$ by using Algorithm \ref{alg1}, for $p\in \{1\%,3\%,5\%\}$, respectively.}
\end{figure}

The initial iteration is chosen as $f_0=0$ with regularization parameter and stopping parameter $\varepsilon=e_J=10^{-6}$. The algorithm stops at iterations $k\in \{2,2,2\}$, for $p\in \{1\%,3\%,5\%\}$, respectively.

\begin{remark}
The optimal regularization parameter $\varepsilon^{\mathrm{opt}}$ can be defined from the conditions $\varepsilon <1$ and $\frac{\delta^2}{\varepsilon}<1$ depending on the noise level $\delta$. The optimal stopping parameter $e_J^{\mathrm{opt}}$ can be defined by analyzing the behavior of the convergence error and the accuracy error defined respectively by
\begin{equation}\label{err}
\begin{aligned}
e(k,f_k)&:=\|\Psi f_k -Y_T\|_{L^2(0,1)\times \mathbb{R}^2}^2\\
E(k,f_k)&:=\|f-f_k\|_{L^2(0,1)},
\end{aligned}
\end{equation}
in terms of the iteration number $k$. We refer to \cite{HR'17} for more details.
\end{remark}

\noindent\textbf{Example 2}\\
We take the exact source term as $f(x)=\sin(\pi x), \; x\in (0,1)$.

\begin{figure}[H]
\centering
\includegraphics[scale=0.5]{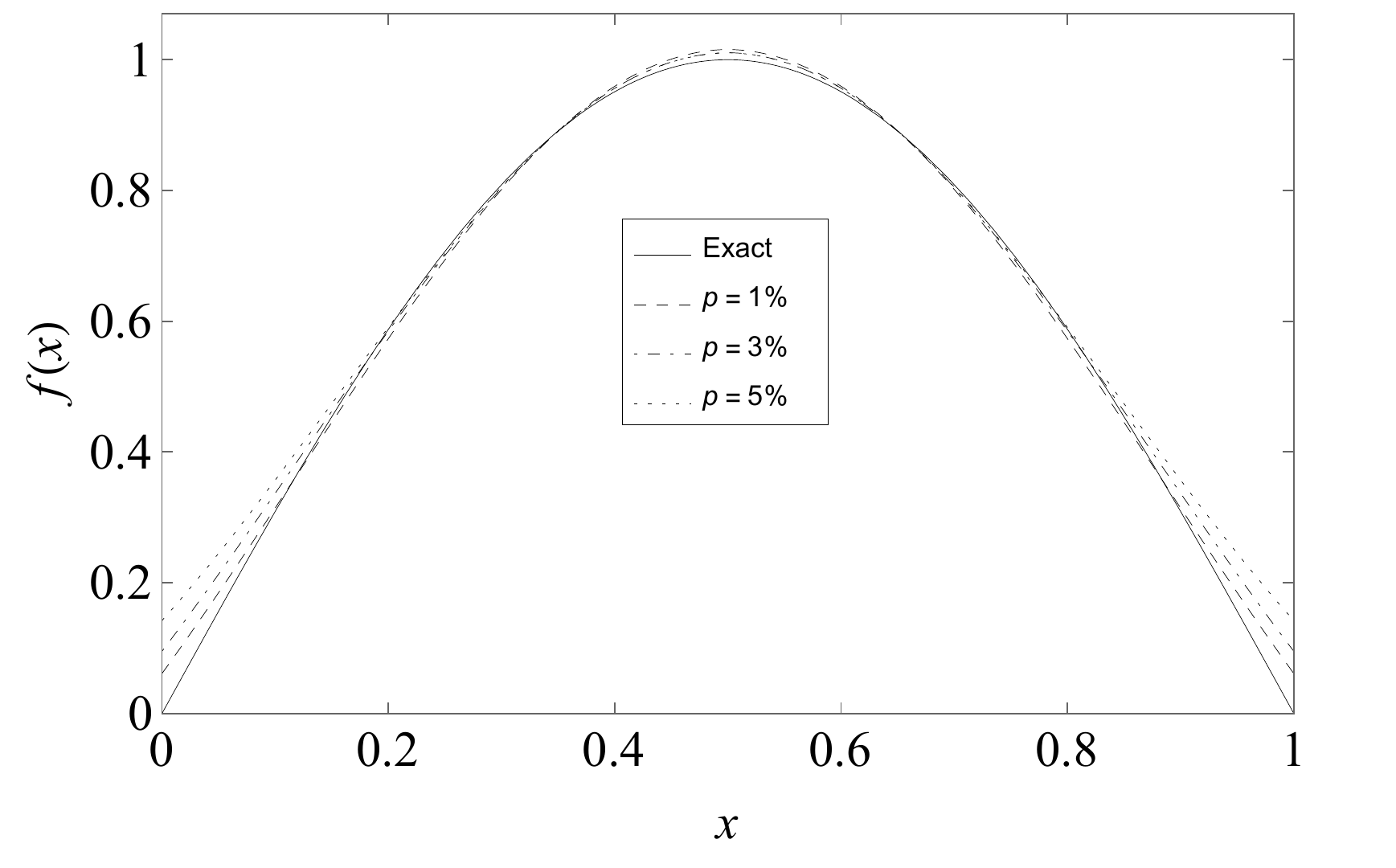}
\caption{Exact and recovered $f(x)$ by using Algorithm \ref{alg1}, for $p\in \{1\%,3\%,5\%\}$, respectively.}
\end{figure}
The initial iteration is chosen as $f_0=0$ with regularization parameter and stopping parameter $\varepsilon=e_J=10^{-8}$. The algorithm stops at iterations $k\in \{3,4,4\}$, for $p\in \{1\%,3\%,5\%\}$, respectively.
\vspace{0.5cm}

\noindent\textbf{Example 3}\\
We take the exact source term as $f(x)=\exp\left(-8\left(x-\frac{1}{2}\right)^2\right), \; x\in (0,1)$.

\begin{figure}[H]
\centering
\includegraphics[scale=0.5]{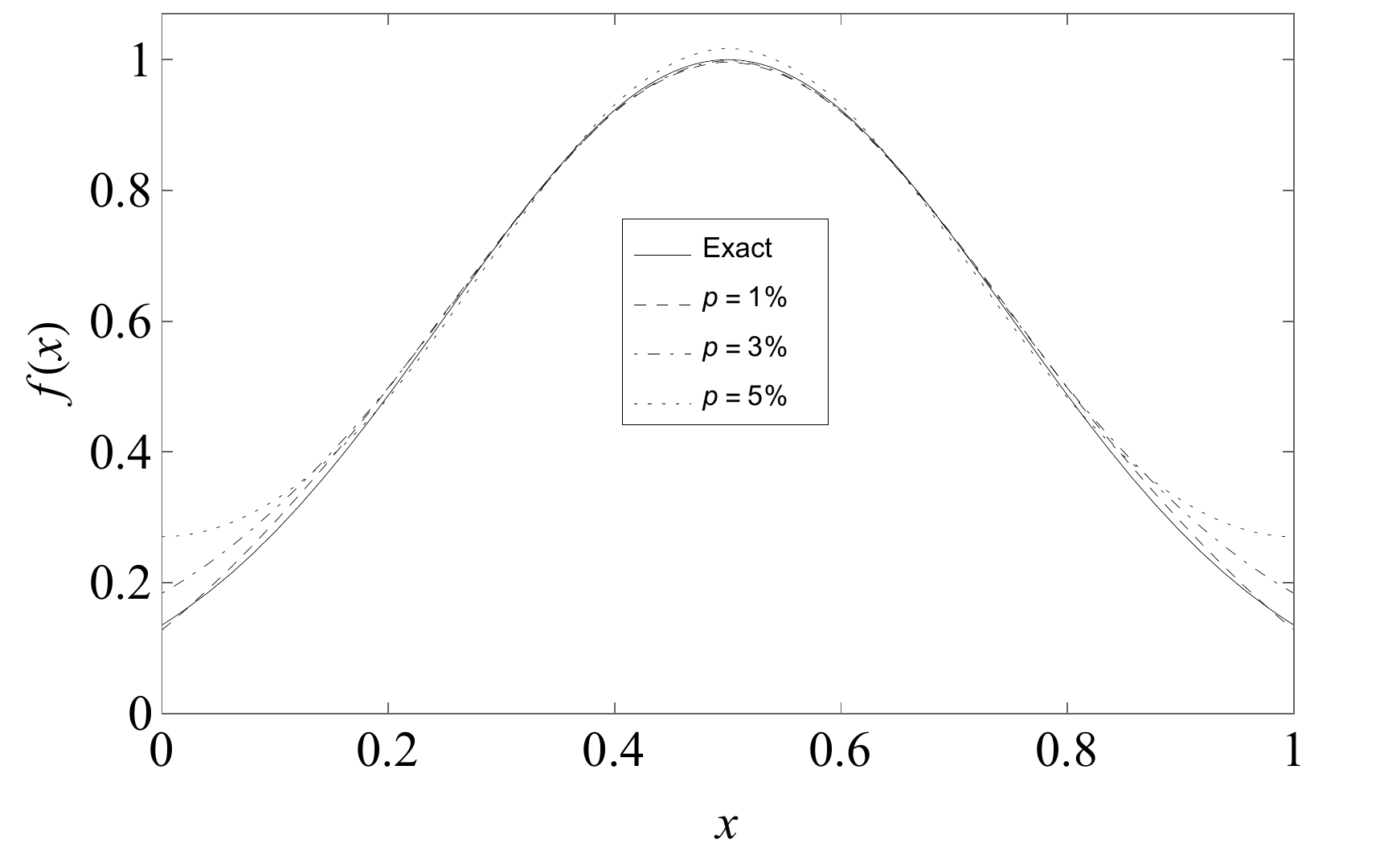}
\caption{Exact and recovered $f(x)$ by using Algorithm \ref{alg1} for $p\in \{1\%,3\%,5\%\}$, respectively.}
\end{figure}
The initial iteration is chosen as $f_0=0$ with regularization parameter and stopping parameter $\varepsilon=e_J=10^{-8}$. The algorithm stops at iterations $k\in \{5,5,35\}$ for $p\in \{1\%,3\%,5\%\}$, respectively.

\begin{table}[ht]
\caption{Errors depending on the iteration number $k$ for noise free data ($p=0\%$).}
\centering\small
$\begin{array}{cccccc}
\hline
 k & 1 & 2 & 3 & 4 & 5 \\
\specialrule{.1em}{.1em}{.1em}\\[-3mm]
e\left(k,f_k\right) & 3.421\times 10^{-1} & 1.101\times 10^{-2} & 4.517\times 10^{-4} & 2.767\times 10^{-4} & 1.413\times 10^{-4} \\
\hline\\[-3mm]
E\left(k,f_k\right) & 2.839\times 10^{-1} & 3.471\times 10^{-2} & 3.469\times 10^{-2} & 9.412\times 10^{-3} & 9.409\times 10^{-3} \\
\hline
\end{array}$
\end{table}

\begin{remark}
The above numerical experiments show that the Landweber scheme yields stable, accurate and fast results for the reconstruction of unknown source terms in heat equation with dynamic boundary conditions. We clearly see that the recovery of the source $f(x)$ becomes more accurate as the noise level $p$ decreases. This approach can be adapted for the simultaneous recovery of internal and boundary source terms that depend on both time and space, but it might require a large number of iterations. A performance analysis for this case will be treated in a forthcoming paper.
\end{remark}

\section{Conclusions}\label{sec6}
In this paper, we have considered an inverse problem for determining internal and boundary source terms from final time data in heat equation with dynamic boundary conditions. Adapting the weak solution approach, a minimization problem for the Tikhonov functional is analyzed, and a gradient formula of the functional is established via the solution of an appropriate adjoint system. Then the Lipschitz continuity of the Fréchet gradient is proved. Using calculus of variations techniques, the existence and the uniqueness of a quasi-solution are proved. In particular, a sufficient condition for the uniqueness is presented. Finally, some numerical tests are provided for recovering an internal heat source in the one-dimensional case.

\end{document}